\pdfoutput=1
\documentclass[12pt,reqno]{amsart}

\usepackage{amsmath,amsthm,amssymb,amsfonts,mathtools}
\usepackage{bm}
\usepackage{enumitem}
\usepackage{graphicx,latexsym}
\usepackage{algorithm,algorithmic}
\usepackage{booktabs}
\usepackage{array}
\usepackage{url}
\numberwithin{equation}{section}

\makeatletter
\def\subsection{\@startsection{subsection}{2}{\z@}%
  {.7\linespacing\@plus.3\linespacing}{.35\linespacing}%
  {\normalfont\bfseries\def\@secnumfont{\bfseries}}}
\def\paragraph{\@startsection{paragraph}{4}{\z@}%
  {.45\linespacing\@plus.2\linespacing}{.2\linespacing}%
  {\normalfont\bfseries}}
\makeatother

\makeatletter
\def\@captionfont{\scriptsize}
\makeatother

\makeatletter
\def\@setabstracta{%
  \ifvoid\abstractbox
  \else
    \skip@20\p@ \advance\skip@-\lastskip
    \advance\skip@-\baselineskip \vskip\skip@
    \unvbox\abstractbox
    \prevdepth\z@
  \fi
}
\makeatother

\theoremstyle{plain}
\newtheorem{theorem}{Theorem}[section]
\newtheorem{lemma}[theorem]{Lemma}
\newtheorem{proposition}[theorem]{Proposition}
\newtheorem{corollary}[theorem]{Corollary}
\theoremstyle{definition}
\newtheorem{definition}[theorem]{Definition}
\newtheorem{assumption}[theorem]{Assumption}

\newcommand{\diagop}{\operatorname{diag}}
\newcommand{\rankop}{\operatorname{rank}}
\newcommand{\distop}{\operatorname{dist}}
\newcommand{\prox}{\operatorname{prox}}
\newcommand{\Lip}{\operatorname{Lip}}
\newcommand{\argmin}{\operatorname*{arg\,min}}
\newcommand{\R}{\mathbb{R}}
\newcommand{\E}{\mathbb{E}}

\newcommand{\norm}[1]{\left\lVert #1\right\rVert}
\newcommand{\abs}[1]{\left\lvert #1\right\rvert}
\newcommand{\inner}[2]{\left\langle #1,#2\right\rangle}
\newcommand{\st}{\;:\;}
\newcommand{\cA}{\mathcal{A}}
\newcommand{\cE}{\mathcal{E}}

\title[Smooth-to-Hard Spectral Regularization]{Data-Driven Smooth-to-Hard Spectral Regularization for General Linear Inverse Problems}
\author[Yang, Wang, and Yang]{Guanzhong Yang}
\address{Department of Mathematics\\ Imperial College London, UK}
\email{guanzhong.yang25@imperial.ac.uk}

\author{Zi Wang}
\address{Department of Bioengineering\\ Imperial-X, Imperial College London, UK}
\email{zi.wang@imperial.ac.uk}

\author{Guang Yang}
\address{Department of Bioengineering\\ Imperial-X, Imperial College London, UK}
\email{g.yang@imperial.ac.uk}
\thanks{Zi Wang and Guang Yang are co-corresponding authors.}
\subjclass[2020]{Primary 65K10, 49N45; Secondary 15A18, 62G08, 90C26}
\keywords{low-rank matrix recovery, nonconvex spectral regularization, weak convexity, proximal gradient, singular-value shrinkage, validation risk}

\begin{document}

\begin{abstract}
We study low-rank recovery from noisy linear observations \(y=\cA(X_0)+\varepsilon\), where \(X_0\in\R^{M\times N}\) is unknown and \(\cA:\R^{M\times N}\to\R^p\) is a known linear measurement operator.  The operator need not be injective on the full matrix space; whenever prediction error is converted into Frobenius reconstruction error, we assume that \(\cA\) satisfies a restricted stability lower bound on the relevant structured error class.  Nuclear-norm regularization is convex, but soft thresholding imposes a persistent absolute bias on retained singular values; rank regularization avoids this bias, whereas its proximal map is discontinuous hard thresholding.  Building on an earlier SURE analysis of the same shrinkage family for direct matrix denoising, we investigate the variational and optimization properties of the logistic smooth-to-hard shrinker
\[
    f_{a,\lambda}(s)
    =\frac{s}{1+\exp[-a(s/\lambda-1)]},\qquad s\geq0,
\]
where \(\lambda\) locates the transition and \(a\) controls its sharpness.  Our principal contribution concerns this particular family: we derive an exact Lambert-\(W\) inverse, construct the induced nonconvex penalty, and identify its exact weak-convexity constant \(\rho_a=1-L_a^{-1}\), where \(L_a\) is the maximum shrinker slope.  This identity unifies worst-case sensitivity, negative curvature, and proximal strong-convexity margin.  Standard spectral proximal principles then lift the construction to singular values.  In the recovery algorithm the required proximal scale is \(\beta=\eta\alpha\), where \(\eta>0\) is the proximal-gradient step size and \(\alpha>0\) is the regularization weight; the scaled matrix proximal map is unique whenever \(\beta\rho_a<1\).  Specializing established nonconvex proximal-gradient arguments gives sufficient descent, a vanishing stationarity residual, stationary cluster points, and an explicit finite-iteration bound.  Independent validation gives fixed-candidate conditional unbiasedness and a finite-grid oracle inequality, while restricted stability converts prediction error into Frobenius error.  A retrospective single-image evaluation on \(20\) OCMR series covered accelerations \(R\in\{4,6,8\}\), nominal SNR levels \(\{0,10,20,30\}\) dB, and Soft, Hard, and L+S baselines.  The proposed rule achieved higher mean PSNR than Hard in \(11\) of \(12\) conditions and showed a noise-dependent crossover relative to Soft and L+S, with higher mean PSNR than all three baselines at every \(30\)-dB condition.
\end{abstract}

\maketitle

\section{Introduction}

Low-rank structure is widely used in matrix completion, dynamic imaging, spatio-temporal inverse problems, and collections of related signals.  We consider
\[
    \cA:\R^{M\times N}\to\R^p,
    \qquad y=\cA(X_0)+\varepsilon.
\]
Because \(\cA\) may be undersampled and non-injective, image-domain guarantees require restricted rather than global stability: for a relevant structured error class \(\cE\),
\begin{equation}\label{eq:restricted-stability-intro}
    \frac1p\norm{\cA(H)}_2^2\geq\kappa\norm{H}_F^2,
    \qquad H\in\cE.
\end{equation}
A direct formulation penalises rank,
\begin{equation}\label{eq:rank-model}
    \min_X\;
    \frac12\norm{\cA(X)-y}_2^2+\gamma\rankop(X),
\end{equation}
whereas nuclear-norm regularization uses
\[
  \norm{X}_*=\sum_i\sigma_i(X)
\]
and yields singular-value soft thresholding \cite{recht2010,cai2010}.  Hard thresholding is nearly unbiased above its threshold but discontinuous; soft thresholding is continuous but imposes a constant absolute bias.  Neither geometry is fully satisfactory when small components should be suppressed without systematically attenuating dominant ones.  Motivated by nonconvex bias-reduction principles such as MCP \cite{zhang2010}, we seek a continuous spectral rule that approaches hard thresholding while retaining a precise variational and algorithmic interpretation.

We study the logistic shrinker
\[
    f_{a,\lambda}(s)
    =\frac{s}{1+\exp[-a(s/\lambda-1)]},\qquad s\geq0,
\]
where \(\lambda\) locates the transition and \(a\) controls its sharpness.  This family was previously used for SURE-based direct matrix denoising \cite{yang2026}; here the questions are variational and algorithmic.  We invert the map exactly using the Lambert \(W\) function, construct its induced saturating penalty, and establish the hard-threshold limit.  If \(L_a\) is the maximum normalized shrinker slope, the same calculation gives
\begin{equation}\label{eq:tradeoff-intro}
    \boxed{
    \begin{aligned}
    \text{worst input sensitivity}&=L_a,\\
    \text{weak convexity}&=1-L_a^{-1},\\
    \text{unit-prox curvature margin}&=L_a^{-1}.
    \end{aligned}}
\end{equation}
Thus increased hardness reduces large-signal bias but increases worst-case sensitivity and weak nonconvexity.

The inverse construction is important because specifying a shrinker does not by itself identify an objective function or justify its use inside an iterative recovery method.  Here it produces an explicit penalty, reveals its exact negative curvature, and distinguishes the designed unit-scale shrinker from the generally scaled proximal map required by recovery algorithms.

The scalar penalty is lifted to singular values without changing the weak-convexity constant.  At proximal scale \(\beta=\eta\alpha\), each singular value is obtained from a monotone scalar equation and the update is unique when \(\beta\rho_a<1\).  The resulting proximal-gradient method has sufficient descent, vanishing stationarity residuals, stationary cluster points, and an explicit finite-iteration bound.  These guarantees concern stationary behaviour rather than global optimality, and boundedness is stated separately because a saturating penalty need not make an undersampled objective coercive.

Validation on a finite candidate grid gives an oracle inequality for prediction risk.  This statistical statement is kept separate from the optimization analysis: it controls the split-trained candidates in the validation measurement domain, whereas restricted stability is additionally required to obtain Frobenius reconstruction control.  A subsequent full-data refit is an empirical post-selection step rather than an automatic consequence of the oracle inequality.

Accordingly, the paper distinguishes three levels of analysis.  The variational results identify the penalty represented by the shrinker; the optimization results describe a fixed-parameter algorithm; and the validation results compare a prespecified finite collection of training-based reconstructions.  None of these levels alone implies global recovery for an undersampled operator.

The numerical study uses \(20\) OCMR cardiac-cine series, accelerations \(R\in\{4,6,8\}\), and nominal SNR levels from \(0\) to \(30\) dB.  Soft, Hard, and L+S comparisons show an advantage over Hard in most conditions and a noise-dependent trade-off against Soft and L+S.

\subsection*{Related work and scope of the contributions}

Nonconvex singular-value penalties and generalized singular-value thresholding are established tools for reducing convex shrinkage bias \cite{lu2014,lu2015}.  Inverse construction of penalties from shrinkage maps is also known \cite{chartrand2014}, and logistic singular-value shaping has appeared in rank estimation \cite{yadav2015}.  Those works establish the broader principles, but they do not provide the present family-specific combination of an exact inverse, sharp weak-convexity constant, arbitrary-scale proximal equation, and recovery analysis under general linear measurements.  The proximal-gradient and finite-grid validation results nevertheless specialize established frameworks \cite{gong2013} rather than propose new general theories.

Within these boundaries, the contributions of the paper are the following.
\begin{enumerate}[label=\textup{(\roman*)}]
    \item We derive the exact inverse and induced saturating penalty, prove that the shrinker is its unique unit-scale proximal map, and identify the hard-threshold limits.
    \item We prove the sharp weak-convexity identity \(\rho_a=1-L_a^{-1}\) for both scalar and spectral penalties and derive the scaled proximal equation and uniqueness condition \(\beta\rho_a<1\).
    \item We apply standard optimization and validation guarantees to the inverse problem, keeping prediction and reconstruction assumptions distinct.
\end{enumerate}

The spectral lifting uses standard facts for unitarily invariant functions \cite{lewis1995,lewissendov2005}; all shrinker-specific inverse, curvature, scaling, and limiting calculations are proved below.

\section{Low-Rank Inverse Problems and the Regularization Model}

\subsection{Observation model}

Let \(X_0\in\R^{M\times N}\) be unknown and suppose
\begin{equation}\label{eq:observation-model}
    y=\cA(X_0)+\varepsilon\in\R^p,
\end{equation}
where \(\cA:\R^{M\times N}\to\R^p\) is a known linear measurement operator, including missing-entry maps, convolutional blur, downsampling, and subsampled Fourier measurements.  In contrast with direct denoising, \(\cA\) need not be injective, and \(\cA^*\cA\) need not be a multiple of the identity.  We restrict the scope of the paper to observation designs that are quantitatively injective on the structured error classes relevant to reconstruction.

\begin{assumption}[Restrictedly stable observation design]\label{ass:restricted-stability}
The full measurement operator \(\cA\) belongs to a class for which there
exist a prescribed structured error set
\(\cE\subseteq\R^{M\times N}\) and a constant \(\kappa>0\) satisfying
\begin{equation}\label{eq:restricted-stability}
    \frac1p\norm{\cA(H)}_2^2
    \geq\kappa\norm{H}_F^2,
    \qquad H\in\cE.
\end{equation}
When the independent validation split of Section~8 is used, the chosen
validation block
\(\cA_{\mathrm{val}}:\R^{M\times N}\to\R^m\) is likewise restricted to
operators for which there exist a prescribed validation error class
\(\cE_{\mathrm{val}}\subseteq\R^{M\times N}\) and
\(\kappa_{\mathrm{val}}>0\) satisfying
\begin{equation}\label{eq:validation-restricted-stability}
    \frac1m\norm{\cA_{\mathrm{val}}(H)}_2^2
    \geq\kappa_{\mathrm{val}}\norm{H}_F^2,
    \qquad H\in\cE_{\mathrm{val}}.
\end{equation}
\end{assumption}

Here ``prescribed'' means that the structural class is specified independently of the validation noise.  In Section~8 the validation statement is applied conditionally on the training data, and the realized candidate errors are required to lie in \(\cE_{\mathrm{val}}\).  Equivalently, one may choose in advance a deterministic structural superset containing all candidate errors under consideration.

Assumption \ref{ass:restricted-stability} restricts the class of
observation designs studied in this paper; it does not assert that an
arbitrary linear operator satisfies either lower bound.  In particular,
restricted stability of the full operator \(\cA\) does not automatically
pass to an arbitrary validation subblock, which is why the validation
design is included explicitly in the same assumption.  The lower bound is
equivalent to requiring a positive minimum singular value after restricting
the operator to the normalized directions in the relevant error class.  It
is therefore necessary that
\(\cE\cap\ker\cA=\{0\}\), with an analogous requirement for
\(\cE_{\mathrm{val}}\) and \(\cA_{\mathrm{val}}\).  Such conditions are
standard in structured inverse problems: random matrix-sensing operators
can satisfy them on low-rank error sets when sufficiently many measurements
are available \cite{recht2010}.
Thus the assumption is realistic for appropriate measurement designs but
is not intended to cover arbitrary undersampled operators.

The matrix space is equipped with
\[
    \inner{X}{Z}_F=\operatorname{tr}(X^T Z),
    \qquad \norm{X}_F^2=\inner{X}{X}_F.
\]
The adjoint \(\cA^*:\R^p\to\R^{M\times N}\) is defined by
\(
\inner{\cA(X)}{z}_{\R^p}=\inner{X}{\cA^*z}_F
\).
Set
\begin{equation}\label{eq:operator-constant}
    \norm{\cA}_{\mathrm{op}}
    =\sup_{\norm{X}_F=1}\norm{\cA(X)}_2,
    \qquad L_{\cA}=\norm{\cA}_{\mathrm{op}}^2.
\end{equation}
Gaussian noise, \(\varepsilon\sim N(0,\tau^2I_p)\), will be imposed only in the validation analysis.  The deterministic optimization results require no distributional assumption.

Write \(q=\min(M,N)\) and let
\[
    X=U\diagop(\sigma_1(X),\ldots,\sigma_q(X))V^T
\]
be a thin singular value decomposition, with
\(
\sigma_1(X)\geq\cdots\geq\sigma_q(X)\geq0
\).

\subsection{Why a new spectral shrinker is needed}

For \(s\geq0\), the scalar rank penalty gives hard thresholding,
\begin{equation}\label{eq:rank-prox-scalar}
  \prox_{\gamma\mathbf{1}_{\{\cdot\neq0\}}}(s)
  =
  \begin{cases}
    0, & s<\sqrt{2\gamma},\\
    s, & s>\sqrt{2\gamma},
  \end{cases}
\end{equation}
with both values minimizing at equality.  The nuclear norm gives
\begin{equation}\label{eq:nuclear-prox}
    \prox_{\gamma\norm{\cdot}_*}(Y)
    =U\diagop\bigl((s_i-\gamma)_+\bigr)V^T
\end{equation}
for \(Y=U\diagop(s_i)V^T\).  We therefore seek a continuous increasing map that suppresses small singular values, has vanishing relative bias above a scale \(\lambda\), approaches hard thresholding as its hardness grows, and remains the proximal map of a well-defined spectral penalty.

\subsection{Regularization model}

Later, we will construct a scalar penalty \(\phi_{a,\lambda}:[0,\infty)\to[0,\infty)\) and its spectral lift
\begin{equation}\label{eq:spectral-penalty-def-preview}
    \Phi_{a,\lambda}(X)
    =\sum_{i=1}^q\phi_{a,\lambda}(\sigma_i(X)).
\end{equation}
Thus, for fixed \(a,\lambda,\alpha>0\), the recovery objective is
\begin{equation}\label{eq:main-objective}
    F_{a,\lambda}(X)
    =g(X)+\alpha\Phi_{a,\lambda}(X),
    \qquad
    g(X)=\frac12\norm{\cA(X)-y}_2^2.
\end{equation}
The threshold \(\lambda\) has the same units as a singular value, whereas the hardness \(a\) is dimensionless.  The coefficient \(\alpha\) balances the data term and the spectral penalty.  It is part of the statistical model and must be distinguished from the algorithmic step size introduced later.

The penalty in \eqref{eq:spectral-penalty-def-preview} will not be postulated independently.  Instead, we first prescribe a scalar shrinkage rule with the desired smooth-to-hard behaviour and then recover the penalty for which that rule is the unit-scale proximal map.  This inverse-design step is carried out next.

\section{Scalar Shrinkage and Variational Construction}

\subsection{The logistic shrinker}

For \(a>0\), define the logistic gate and normalized shrinker
\begin{equation}\label{eq:normalized-shrinker}
    \ell_a(t)=\frac{1}{1+e^{-a(t-1)}},
    \qquad
    h_a(t)=t\ell_a(t),
    \qquad t\geq0.
\end{equation}
For a threshold scale \(\lambda>0\), set
\begin{equation}\label{eq:scaled-shrinker}
    f_{a,\lambda}(s)
    =\lambda h_a(s/\lambda)
    =\frac{s}{1+\exp[-a(s/\lambda-1)]},
    \qquad s\geq0.
\end{equation}

The normalized representation separates the two roles of the parameters: \(\lambda\) fixes the location of the transition, while \(a\) changes its shape without changing its physical scale.  We first establish the geometric properties needed for the subsequent inverse construction.

\subsection{Monotonicity, bias, and the hard-threshold limit}

\begin{lemma}[Strict monotonicity]\label{lem:monotone}
For every \(a>0\), the map \(h_a:[0,\infty)\to[0,\infty)\) is a strictly increasing bijection.  Consequently, \(f_{a,\lambda}\) is a strictly increasing bijection for every \(\lambda>0\).
\end{lemma}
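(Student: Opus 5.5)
We prove that \(h_a\) has strictly positive derivative on \((0,\infty)\), fixes \(0\), is continuous, and tends to \(\infty\). Strict monotonicity then combines with the intermediate value theorem to give a bijection of \([0,\infty)\) onto itself.

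\textbf{Step 1 (derivative).} The logistic gate satisfies \(\ell_a'(t)=a\,\ell_a(t)\bigl(1-\ell_a(t)\bigr)\). Hence
\[
    h_a'(t)=\ell_a(t)+t\ell_a'(t)=\ell_a(t)\bigl[1+a t\bigl(1-\ell_a(t)\bigr)\bigr].
\]
For \(t\geq0\) we have \(\ell_a(t)\in(0,1)\), so \(1-\ell_a(t)>0\). Therefore the bracket is at least \(1\), and \(h_a'(t)\geq\ell_a(t)>0\) for all \(t\geq0\). This gives strict increase on \([0,\infty)\) by the mean value theorem. In particular, no sign analysis of a nonmonotone factor is needed, which is the only place one might expect trouble.

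\textbf{Step 2 (range).} We have \(h_a(0)=0\), and \(h_a\) is continuous. Since \(\ell_a(t)\geq\ell_a(1)=1/2\) for \(t\geq1\), it follows that \(h_a(t)\geq t/2\to\infty\). Every value in \([0,\infty)\) is therefore attained by the intermediate value theorem. Injectivity follows from Step 1, so \(h_a\) is a bijection of \([0,\infty)\) onto itself, with \(h_a(t)\geq0\) because both factors are nonnegative.

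\textbf{Step 3 (scaled version).} By \eqref{eq:scaled-shrinker}, \(f_{a,\lambda}=D_\lambda\circ h_a\circ D_{1/\lambda}\), where \(D_c(s)=cs\). Each \(D_c\) with \(c>0\) is a strictly increasing bijection of \([0,\infty)\). A composition of strictly increasing bijections is again one, which yields the claim for \(f_{a,\lambda}\).
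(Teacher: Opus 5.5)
Your proof is correct and follows essentially the same route as the paper: you show \(h_a'(t)=\ell_a(t)+at\ell_a(t)(1-\ell_a(t))>0\), note \(h_a(0)=0\), use continuity and unboundedness to get surjectivity, and rescale to reach \(f_{a,\lambda}\). Your explicit bound \(h_a(t)\geq t/2\) for \(t\geq1\) is a slightly more quantitative version of the paper's remark that \(\ell_a(t)\to1\).
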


\begin{proof}
Differentiating \(\ell_a\) gives
\[
    \ell_a'(t)=a\ell_a(t)(1-\ell_a(t)).
\]
Hence
\begin{equation}\label{eq:h-prime}
    h_a'(t)
    =\ell_a(t)+at\ell_a(t)(1-\ell_a(t))>0
\end{equation}
for every \(t\geq0\).  Thus \(h_a\) is strictly increasing and continuous.  Moreover, \(h_a(0)=0\) and \(h_a(t)\to\infty\) because \(\ell_a(t)\to1\).  It is therefore onto \([0,\infty)\).  Positive rescaling yields the claim for \(f_{a,\lambda}\).
\end{proof}

The bijectivity established above is what makes the later inverse-penalty construction possible.  Before using that inverse, we verify that the proposed map has the intended shrinkage bias and approaches the two branches of hard thresholding away from the transition point.

\begin{proposition}[Shrinkage and exponential approximation away from the threshold]\label{prop:bias}
For every \(s>0\),
\begin{equation}\label{eq:strict-shrinkage}
    0<f_{a,\lambda}(s)<s.
\end{equation}
The exact deviation from the identity branch is
\begin{equation}\label{eq:bias-exact}
    s-f_{a,\lambda}(s)
    =\frac{s}{1+\exp[a(s/\lambda-1)]}.
\end{equation}
For every \(\delta>0\), the approximation error is exponentially small on both sides of the transition.  If \(0\leq s\leq\lambda-\delta\), then
\begin{equation}\label{eq:below-threshold-bound}
    0\leq f_{a,\lambda}(s)
    \leq s e^{-a\delta/\lambda},
\end{equation}
whereas if \(s\geq\lambda+\delta\), then
\begin{equation}\label{eq:bias-bound}
    0\leq s-f_{a,\lambda}(s)
    \leq s e^{-a\delta/\lambda}.
\end{equation}
\end{proposition}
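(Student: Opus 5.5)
The plan is to work directly from the closed form \eqref{eq:scaled-shrinker}, writing \(u=a(s/\lambda-1)\) so that \(f_{a,\lambda}(s)=s/(1+e^{-u})\). Every claim then reduces to an elementary statement about the logistic function \(\sigma(u)=1/(1+e^{-u})\), which takes values strictly between \(0\) and \(1\). No earlier result is needed beyond the definition. Lemma~\ref{lem:monotone} is not required, although it is consistent with the conclusions.

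\emph{Strict shrinkage.} For \(s>0\) I would note that \(0<\sigma(u)<1\) holds because \(e^{-u}\in(0,\infty)\). Multiplying by \(s>0\) then gives \eqref{eq:strict-shrinkage} immediately.

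\emph{Exact deviation.} I would compute
\[
s-f_{a,\lambda}(s)=s\bigl(1-\sigma(u)\bigr)=s\,\frac{e^{-u}}{1+e^{-u}}=\frac{s}{e^{u}+1},
\]
which is \eqref{eq:bias-exact} after substituting back for \(u\).

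\emph{Below the threshold.} If \(0\le s\le\lambda-\delta\), then \(u\le -a\delta/\lambda\), so \(-u\ge a\delta/\lambda\). This gives
\[
f_{a,\lambda}(s)=\frac{s}{1+e^{-u}}\le \frac{s}{e^{-u}}=s e^{u}\le s e^{-a\delta/\lambda},
\]
where the first inequality drops the \(1\) in the denominator. Nonnegativity is immediate. I would also record the case \(s=0\), where everything equals zero, and observe that the range is empty if \(\delta>\lambda\), so the claim is then vacuous.

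\emph{Above the threshold.} If \(s\ge\lambda+\delta\), then \(u\ge a\delta/\lambda\). Using \eqref{eq:bias-exact} and dropping the \(1\) again gives
\[
s-f_{a,\lambda}(s)=\frac{s}{1+e^{u}}\le s e^{-u}\le s e^{-a\delta/\lambda}.
\]
The lower bound \(0\le s-f_{a,\lambda}(s)\) follows from \eqref{eq:strict-shrinkage}.

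The only delicate point is keeping the signs of \(u\) straight in the two regimes; the rest is routine algebra.
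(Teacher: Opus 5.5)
Your proposal is correct and follows the paper's proof essentially step for step. Strict shrinkage comes from \(0<\ell_a<1\), the exact deviation comes from direct subtraction, and both exponential bounds come from dropping the \(1\) in the denominator after using \(1-s/\lambda\ge\delta/\lambda\) or \(s/\lambda-1\ge\delta/\lambda\); the substitution \(u=a(s/\lambda-1)\) is only a notational convenience.
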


\begin{proof}
Since \(0<\ell_a<1\), strict shrinkage follows directly.  Subtracting \eqref{eq:scaled-shrinker} from \(s\) gives \eqref{eq:bias-exact}.

Suppose first that \(0\leq s\leq\lambda-\delta\).  Then
\[
    1-\frac{s}{\lambda}\geq\frac{\delta}{\lambda},
\]
and hence
\[
    f_{a,\lambda}(s)
    =\frac{s}{1+\exp[a(1-s/\lambda)]}
    \leq\frac{s}{\exp(a\delta/\lambda)}
    =s e^{-a\delta/\lambda},
\]
which proves \eqref{eq:below-threshold-bound}.  If instead \(s\geq\lambda+\delta\), then
\(
s/\lambda-1\geq\delta/\lambda
\),
so the denominator in \eqref{eq:bias-exact} is at least \(e^{a\delta/\lambda}\).  This gives \eqref{eq:bias-bound}.
\end{proof}

\begin{proposition}[Hard-threshold limit]\label{prop:hard-limit-shrinker}
For every fixed \(s\neq\lambda\),
\begin{equation}\label{eq:hard-limit-shrinker}
    f_{a,\lambda}(s)\longrightarrow
    H_\lambda(s)
    :=\begin{cases}
        0,&0\leq s<\lambda,\\
        s,&s>\lambda,
    \end{cases}
    \qquad a\to\infty.
\end{equation}
At the transition point, \(f_{a,\lambda}(\lambda)=\lambda/2\) for every finite \(a\).
\end{proposition}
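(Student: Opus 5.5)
The plan is to read the limit directly off the closed form \eqref{eq:scaled-shrinker}, or off the bounds of Proposition~\ref{prop:bias}, by splitting into the cases \(s<\lambda\), \(s>\lambda\) and \(s=\lambda\). Fix \(s\neq\lambda\) and set \(\delta=\abs{s-\lambda}>0\). The estimates \eqref{eq:below-threshold-bound} and \eqref{eq:bias-bound} then give uniform exponential control in \(a\), so no new analysis is needed beyond choosing \(\delta\) appropriately.

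\emph{Case \(0\leq s<\lambda\).} Here \(s\leq\lambda-\delta\), so \eqref{eq:below-threshold-bound} gives \(0\leq f_{a,\lambda}(s)\leq s e^{-a\delta/\lambda}\). The right-hand side tends to \(0\) as \(a\to\infty\), since \(s\) and \(\delta/\lambda>0\) are fixed. Hence \(f_{a,\lambda}(s)\to 0=H_\lambda(s)\). The endpoint \(s=0\) is trivial, because \(f_{a,\lambda}(0)=0\) for every \(a\).

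\emph{Case \(s>\lambda\).} Here \(s\geq\lambda+\delta\), so \eqref{eq:bias-bound} gives \(0\leq s-f_{a,\lambda}(s)\leq s e^{-a\delta/\lambda}\to0\). Hence \(f_{a,\lambda}(s)\to s=H_\lambda(s)\). As a cross-check, one can argue directly: the exponent \(-a(s/\lambda-1)\) tends to \(-\infty\) when \(s>\lambda\) and to \(+\infty\) when \(s<\lambda\), so the gate \(\ell_a(s/\lambda)\) tends to \(1\) or \(0\) respectively.

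\emph{Transition point.} Substituting \(s=\lambda\) makes the exponent \(-a(1-1)=0\) for every finite \(a\). This gives \(f_{a,\lambda}(\lambda)=\lambda/(1+1)=\lambda/2\), which is independent of \(a\), so the pointwise limit there is \(\lambda/2\) and lies strictly between the two branch values \(0\) and \(\lambda\).

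There is no genuine obstacle. The only point needing care is that the limit is pointwise and not uniform near \(s=\lambda\): the rate \(e^{-a\delta/\lambda}\) degrades as \(\delta\to0\). It is for this reason that the statement excludes \(s=\lambda\) and records the value \(\lambda/2\) separately.
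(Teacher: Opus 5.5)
Your proof is correct and is essentially the paper's. The paper simply observes that the exponent \(-a(s/\lambda-1)\) tends to \(+\infty\) or \(-\infty\) according as \(s<\lambda\) or \(s>\lambda\), which is your ``cross-check'', and it reads off \(f_{a,\lambda}(\lambda)=\lambda/2\) directly. Your main route through the bounds \eqref{eq:below-threshold-bound} and \eqref{eq:bias-bound} of Proposition~\ref{prop:bias}, with \(\delta=\abs{s-\lambda}\), is a valid refinement that also gives the explicit rate \(s e^{-a\abs{s-\lambda}/\lambda}\).
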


\begin{proof}
If \(s<\lambda\), then \(-a(s/\lambda-1)\to+\infty\), so the denominator in \eqref{eq:scaled-shrinker} diverges.  If \(s>\lambda\), the exponential term tends to zero.  The value at \(s=\lambda\) is immediate.
\end{proof}

For every finite \(a\), however, \(f_{a,\lambda}(s)>0\) whenever \(s>0\).  The proposed family therefore produces effective rather than exact low rank: small singular values may become negligible, but they are not set identically to zero.  Nor is the family an exact interpolation from soft thresholding; its finite-\(a\) members form a distinct class of smooth shrinkers whose limiting behaviour is hard thresholding.

Strict monotonicity now supplies the key link from a shrinkage rule to a regularizer.  Because \(f_{a,\lambda}\) is invertible, its inverse can be inserted into the first-order condition of a proximal problem.  The required inverse is explicit in terms of the Lambert function, defined by \(W(z)e^{W(z)}=z\).  All arguments below are nonnegative, so only the real principal branch \(W_0\) is needed \cite{corless1996}.

\subsection{Exact inverse and induced penalty}

\begin{proposition}[Inverse formula]\label{prop:inverse}
For every \(v\geq0\),
\begin{equation}\label{eq:h-inverse}
    h_a^{-1}(v)
    =v+\frac1aW_0\!\left(av e^{a(1-v)}\right).
\end{equation}
Consequently, for \(u\geq0\),
\begin{equation}\label{eq:f-inverse}
    f_{a,\lambda}^{-1}(u)
    =u+\frac{\lambda}{a}
    W_0\!\left(\frac{au}{\lambda}e^{a(1-u/\lambda)}\right).
\end{equation}
\end{proposition}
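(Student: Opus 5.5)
By Lemma~\ref{lem:monotone}, \(h_a\) is a bijection of \([0,\infty)\). It therefore suffices to fix \(v\geq0\) and show that \(t=v+\frac1aW_0(ave^{a(1-v)})\) is the unique solution in \([0,\infty)\) of \(h_a(t)=v\). The plan is to rewrite the fixed-point equation in Lambert form rather than to verify the formula blindly.

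\textbf{Deriving the Lambert form.} The equation \(h_a(t)=v\) reads \(t=v\bigl(1+e^{-a(t-1)}\bigr)\), that is,
\[
t-v=v\,e^{-a(t-1)}.
\]
Set \(w=a(t-v)\), so that \(t=v+w/a\). Multiplying the displayed equation by \(a\) and substituting gives \(w=av\,e^{-a(v-1)}e^{-w}\), or equivalently
\[
we^{w}=av\,e^{a(1-v)}.
\]
Call the right-hand side \(z\). Since \(z\geq0\), the equation \(we^w=z\) has exactly one real solution with \(w\geq0\), namely \(w=W_0(z)\). The other real branch \(W_{-1}\) is defined only for \(z\in[-1/e,0)\), so it does not arise.

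\textbf{Sign of \(w\) and the converse direction.} Any solution \(t\) of \(h_a(t)=v\) satisfies \(t-v=v\,e^{-a(t-1)}\geq0\), hence \(w\geq0\), and therefore \(w\) must equal \(W_0(z)\). Conversely, suppose \(t:=v+W_0(z)/a\). Reversing the algebra shows \(h_a(t)=v\), and \(t\geq v\geq0\) places \(t\) in the domain of \(h_a\). Uniqueness then follows from the injectivity in Lemma~\ref{lem:monotone}. The case \(v=0\) gives \(z=0\), \(W_0(0)=0\), and \(t=0\), which is consistent. The only real care in the argument is in this step: confirming that \(w\geq0\) and that the principal branch is the correct one.

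\textbf{Scaled version.} Since \(f_{a,\lambda}(s)=\lambda h_a(s/\lambda)\), we have \(f_{a,\lambda}^{-1}(u)=\lambda h_a^{-1}(u/\lambda)\). Substituting \(v=u/\lambda\) into \eqref{eq:h-inverse} yields \eqref{eq:f-inverse} directly.
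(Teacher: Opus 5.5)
Your proposal is correct and follows the paper's route: you rewrite \(h_a(t)=v\) as \(t-v=v e^{-a(t-1)}\), substitute \(a(t-v)\) to obtain \(\bigl(a(t-v)\bigr)e^{a(t-v)}=av e^{a(1-v)}\), apply \(W_0\), and then rescale to get the formula for \(f_{a,\lambda}\). Your additional checks go slightly beyond the paper, which handles branch selection only through its remark that all arguments are nonnegative: you verify that \(t-v\geq0\), that the Lambert argument is nonnegative so only the principal branch can occur, and that the candidate actually solves \(h_a(t)=v\).
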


\begin{proof}
Let \(v=h_a(t)\).  Then
\[
    v+ve^{-a(t-1)}=t,
    \qquad t-v=ve^{-a(t-1)}.
\]
With \(r=t-v\),
\[
    r=ve^{a(1-v)}e^{-ar},
    \qquad (ar)e^{ar}=av e^{a(1-v)}.
\]
Applying \(W_0\) yields \eqref{eq:h-inverse}.  If \(u=f_{a,\lambda}(s)=\lambda h_a(s/\lambda)\), apply \eqref{eq:h-inverse} to \(u/\lambda\) and multiply by \(\lambda\).
\end{proof}

The argument of \(W_0\) in \eqref{eq:f-inverse} is dimensionless because both \(a\) and \(u/\lambda\) are dimensionless, while the prefactor \(\lambda/a\) restores the units of a singular value.  We next use this inverse to identify the regularizer rather than postulating a penalty independently.

\begin{definition}[Induced penalty]\label{def:penalty}
Define
\begin{equation}\label{eq:normalized-penalty}
    p_a(v)=\int_0^v\bigl(h_a^{-1}(r)-r\bigr)\,dr,
    \qquad v\geq0,
\end{equation}
and
\begin{equation}\label{eq:scaled-penalty}
    \phi_{a,\lambda}(u)
    =\lambda^2p_a(u/\lambda),
    \qquad u\geq0.
\end{equation}
The lower integration limit fixes \(\phi_{a,\lambda}(0)=0\).
\end{definition}

By differentiation and Proposition \ref{prop:inverse},
\begin{align}
    \phi_{a,\lambda}'(u)
    &=f_{a,\lambda}^{-1}(u)-u \label{eq:phi-prime-inverse}\\
    &=\frac{\lambda}{a}
    W_0\!\left(\frac{au}{\lambda}e^{a(1-u/\lambda)}\right).
    \label{eq:phi-prime-lambert}
\end{align}

\begin{theorem}[Unique unit-scale proximal representation]\label{thm:unit-prox-scalar}
For every \(a,\lambda>0\) and \(s\geq0\),
\begin{equation}\label{eq:unit-prox-scalar}
    \prox_{\phi_{a,\lambda}}(s)
    :=\argmin_{u\geq0}
    \left\{\frac12(u-s)^2+\phi_{a,\lambda}(u)\right\}
    =f_{a,\lambda}(s).
\end{equation}
The minimizer is unique.
\end{theorem}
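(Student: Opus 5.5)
The approach is to study the objective \(J_s(u)=\tfrac12(u-s)^2+\phi_{a,\lambda}(u)\) on \([0,\infty)\) through the sign of its derivative. By scaling it suffices to treat \(\lambda=1\): with \(u=\lambda v\) and \(s=\lambda t\), we get \(J_s(u)=\lambda^2\bigl[\tfrac12(v-t)^2+p_a(v)\bigr]\), and \(f_{a,\lambda}(s)=\lambda h_a(t)\). So we work with \(j_t(v)=\tfrac12(v-t)^2+p_a(v)\) and aim to show \(\argmin_{v\ge0} j_t=\{h_a(t)\}\).

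Differentiate using \eqref{eq:phi-prime-inverse} in normalized form. Since \(h_a^{-1}\) is continuous by Lemma \ref{lem:monotone}, the integrand in \eqref{eq:normalized-penalty} is continuous, so \(p_a\) is \(C^1\) on \([0,\infty)\) with \(p_a'(v)=h_a^{-1}(v)-v\). Hence
\[
    j_t'(v)=v-t+h_a^{-1}(v)-v=h_a^{-1}(v)-t .
\]
Lemma \ref{lem:monotone} says \(h_a^{-1}\) is a strictly increasing bijection of \([0,\infty)\) onto itself. Therefore \(j_t'\) is strictly increasing and vanishes exactly at \(v^\ast=h_a(t)\), with \(j_t'<0\) on \([0,v^\ast)\) and \(j_t'>0\) on \((v^\ast,\infty)\).

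The conclusion follows from the mean value theorem. \(j_t\) is strictly decreasing on \([0,v^\ast]\) and strictly increasing on \([v^\ast,\infty)\), so \(v^\ast\) is the unique global minimizer on \([0,\infty)\), and existence needs no coercivity argument. In effect \(j_t\) is strictly convex even though \(p_a\) is not. When \(t=0\), we have \(v^\ast=0\) and \(j_0'(v)=h_a^{-1}(v)>0\) for \(v>0\), so the boundary minimizer is again unique. Undoing the scaling gives \(\prox_{\phi_{a,\lambda}}(s)=\lambda h_a(s/\lambda)=f_{a,\lambda}(s)\).

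The main point of care is the regularity of \(p_a\) at \(v=0\) and the legitimacy of the fundamental theorem of calculus there. I would handle it by noting that \(h_a^{-1}(0)=0\), which also follows from \eqref{eq:h-inverse} since \(W_0(0)=0\), so the integrand is continuous and bounded near \(0\). Everything else reduces to the strict monotonicity already established in Lemma \ref{lem:monotone}.
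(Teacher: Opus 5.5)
Your proof is correct and follows essentially the paper's route. The paper also computes \(G_s'(u)=f_{a,\lambda}^{-1}(u)-s\), identifies \(f_{a,\lambda}(s)\) as the only stationary point, and treats \(s=0\) as a boundary case; the only differences are cosmetic. You reduce to \(\lambda=1\) by scaling, and you infer strict convexity from the strict monotonicity of \(h_a^{-1}\) instead of the formula \(G_s''(u)=1/f_{a,\lambda}'(f_{a,\lambda}^{-1}(u))>0\), which slightly lightens the regularity needed.
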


\begin{proof}
Let
\(
G_s(u)=\tfrac12(u-s)^2+\phi_{a,\lambda}(u)
\).
For \(u>0\), \eqref{eq:phi-prime-inverse} gives
\begin{equation}\label{eq:unit-objective-derivative}
    G_s'(u)=f_{a,\lambda}^{-1}(u)-s.
\end{equation}
Hence the only stationary point is \(u=f_{a,\lambda}(s)\).  Differentiating the identity
\(
f_{a,\lambda}(f_{a,\lambda}^{-1}(u))=u
\)
gives
\begin{equation}\label{eq:unit-objective-curvature}
    G_s''(u)
    =(f_{a,\lambda}^{-1})'(u)
    =\frac{1}{f_{a,\lambda}'(f_{a,\lambda}^{-1}(u))}>0.
\end{equation}
Thus \(G_s\) is strictly convex.  If \(s=0\), then \(G_0'(u)=f_{a,\lambda}^{-1}(u)>0\) for \(u>0\), so the unique minimizer is the boundary point \(0=f_{a,\lambda}(0)\).
\end{proof}

Theorem \ref{thm:unit-prox-scalar} converts the original shrinkage design into a genuine variational model.  It remains to understand the shape of the resulting penalty and to verify that its limiting behaviour matches the hard-threshold motivation.

\subsection{Geometry and hard-threshold limit of the penalty}

\begin{proposition}[Penalty geometry]\label{prop:penalty-shape}
The penalty \(\phi_{a,\lambda}\) is nonnegative and strictly increasing on \((0,\infty)\).  Moreover,
\begin{equation}\label{eq:phi-derivative-limits}
    \phi_{a,\lambda}'(0)=0,
    \qquad
    \phi_{a,\lambda}'(u)>0\;(u>0),
    \qquad
    \phi_{a,\lambda}'(u)\to0\;(u\to\infty).
\end{equation}
In fact, \(\phi_{a,\lambda}\) has a finite limiting value as \(u\to\infty\).
\end{proposition}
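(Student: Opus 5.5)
All claims will be read off the Lambert-$W$ formula \eqref{eq:phi-prime-lambert} for $\phi_{a,\lambda}'$, together with $\phi_{a,\lambda}(0)=0$. Set
\[
z(u)=\frac{au}{\lambda}e^{a(1-u/\lambda)} .
\]
For $u\ge0$ we have $z(u)\ge0$, with equality only at $u=0$. The principal branch satisfies $W_0(0)=0$, and $W_0$ is strictly increasing and positive on $(0,\infty)$.

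\textbf{Derivative at $0$ and positivity.} At $u=0$ we get $z(0)=0$, so $\phi_{a,\lambda}'(0)=0$. For $u>0$ we have $z(u)>0$, so $\phi_{a,\lambda}'(u)>0$. Since $\phi_{a,\lambda}(0)=0$ and $\phi_{a,\lambda}$ is $C^1$ on $[0,\infty)$ (the integrand $h_a^{-1}(r)-r$ is continuous), the fundamental theorem of calculus makes $\phi_{a,\lambda}$ strictly increasing on $[0,\infty)$. Hence it is positive on $(0,\infty)$ and nonnegative overall. As an alternative check, \eqref{eq:phi-prime-inverse} gives the same sign, because $f_{a,\lambda}^{-1}(u)>u$ for $u>0$ by strict shrinkage (Proposition~\ref{prop:bias}).

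\textbf{Decay of the derivative.} As $u\to\infty$ we have $z(u)\to0$, since the factor $e^{-au/\lambda}$ beats the linear factor. By continuity of $W_0$ at $0$, $\phi_{a,\lambda}'(u)\to0$.

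\textbf{Finite limit (main step).} Here decay alone is not enough; I need integrability of $\phi_{a,\lambda}'$ on $[0,\infty)$. I will use the elementary bound $W_0(z)\le z$ for $z\ge0$. It follows from $W e^{W}=z$ together with $e^{W}\ge1$ when $W\ge0$. This gives
\[
0\le\phi_{a,\lambda}'(u)\le\frac{\lambda}{a}z(u)=u\,e^{a(1-u/\lambda)},
\]
which is integrable on $[0,\infty)$. Explicitly,
\[
\int_0^\infty u\,e^{a}e^{-au/\lambda}\,du=\frac{e^a\lambda^2}{a^2}.
\]
Because $\phi_{a,\lambda}$ is increasing and bounded above by this quantity, it converges monotonically to a finite limit, with $\lim_{u\to\infty}\phi_{a,\lambda}(u)\le \lambda^2e^a/a^2$.

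If a sharper constant were wanted, one could instead substitute $u=f_{a,\lambda}(s)$ and write the limit as $\int_0^\infty (s-f_{a,\lambda}(s))f_{a,\lambda}'(s)\,ds$. The exponential bias bound \eqref{eq:bias-bound} shows this integral converges. That refinement is not needed for the statement.
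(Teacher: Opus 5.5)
Your proof is correct and has the same skeleton as the paper's. Positivity of \(\phi_{a,\lambda}'\) on \((0,\infty)\) gives strict monotonicity and nonnegativity, and an integrable exponential majorant for \(\phi_{a,\lambda}'\) gives the finite limit.

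The difference is in how the majorant is obtained. The paper works in the normalized variables \(v=u/\lambda\), \(t=h_a^{-1}(v)\) and writes \(t-v=t/(1+e^{a(t-1)})\). It then uses \(\ell_a(t)\geq 1/2\), hence \(t\leq 2v\), to get \(p_a'(v)\leq 2ve^{-a(v-1)}\), which holds only for \(v\geq1\). For positivity, the paper uses exactly your ``alternative check'' via strict shrinkage.

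You instead bound the Lambert-\(W\) expression with the elementary inequality \(W_0(z)\leq z\). This gives \(\phi_{a,\lambda}'(u)\leq ue^{a(1-u/\lambda)}\) on all of \([0,\infty)\), with no case split and a constant \(1\) instead of \(2\). It also yields the explicit global bound \(0\leq\phi_{a,\lambda}\leq\lambda^2e^a/a^2\).

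Your inequality can be read off without \(W_0\). Combine the relation \(t-v=ve^{-a(t-1)}\) from the proof of Proposition~\ref{prop:inverse} with \(t\geq v\). The paper's route uses nothing about \(W_0\) beyond the inverse formula. Yours is shorter and quantitative.
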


\begin{proof}
Strict shrinkage implies that the inverse exceeds its positive input.  Equation \eqref{eq:phi-prime-inverse} therefore proves monotonicity and nonnegativity.  The value at zero follows from \(f_{a,\lambda}^{-1}(0)=0\).

For saturation, write \(v=u/\lambda\) and \(t=h_a^{-1}(v)\).  When \(v\geq1\), one has \(t>v\geq1\), so \(\ell_a(t)\geq1/2\) and therefore \(t\leq2v\).  Hence
\[
    0<t-v
    =\frac{t}{1+e^{a(t-1)}}
    \leq t e^{-a(t-1)}
    \leq2v e^{-a(v-1)}.
\]
The final bound is integrable over \([1,\infty)\).  Since \(p_a'(v)=t-v\), the derivative tends to zero and \(p_a(v)\), hence \(\phi_{a,\lambda}(u)\), converges to a finite limit.
\end{proof}

The curvature is
\begin{equation}\label{eq:phi-second}
    \phi_{a,\lambda}''(u)
    =\frac{1}{f_{a,\lambda}'(f_{a,\lambda}^{-1}(u))}-1.
\end{equation}
Thus the penalty is locally concave exactly where the shrinker slope exceeds one.  This observation connects the variational construction to optimization: the largest shrinker slope will determine the most negative curvature that an algorithm must overcome.  Before quantifying that curvature, we complete the link with rank regularization by passing to the hard limit.

\begin{theorem}[Pointwise hard limit of the penalty]\label{thm:hard-penalty-limit}
For every fixed \(u\geq0\),
\begin{equation}\label{eq:hard-penalty-limit}
    \phi_{a,\lambda}(u)\longrightarrow
    \phi_{\infty,\lambda}(u)
    :=\begin{cases}
       \lambda u-\tfrac12u^2,&0\leq u\leq\lambda,\\
       \tfrac12\lambda^2,&u\geq\lambda,
    \end{cases}
    \qquad a\to\infty.
\end{equation}
\end{theorem}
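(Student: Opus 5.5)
By the scaling \(\phi_{a,\lambda}(u)=\lambda^2p_a(u/\lambda)\), it suffices to work with the normalized penalty and prove that \(p_a(v)\to p_\infty(v)\) as \(a\to\infty\), where \(p_\infty(v)=v-\tfrac12v^2\) for \(0\leq v\leq1\) and \(p_\infty(v)=\tfrac12\) for \(v\geq1\). Multiplying by \(\lambda^2\) then gives \eqref{eq:hard-penalty-limit}. Since \(p_a(v)=\int_0^v(h_a^{-1}(r)-r)\,dr\), the plan has two parts. First, establish the pointwise limit of the integrand \(d_a(r):=h_a^{-1}(r)-r\). Second, pass the limit under the integral by dominated convergence on the bounded interval \([0,v]\). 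The target integrand is \(d_\infty(r)=1-r\) for \(0\leq r<1\) and \(d_\infty(r)=0\) for \(r>1\). Integrating it gives \(v-v^2/2\) for \(v\leq1\), and \(\int_0^1(1-r)\,dr=\tfrac12\) for \(v\geq1\), which matches \(p_\infty\).

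\textbf{Pointwise limit.} Fix \(r\) and set \(t_a=h_a^{-1}(r)\), so that \(r=t_a\ell_a(t_a)\) and \(t_a>r\) for \(r>0\).

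For \(r>1\), the saturation estimate in the proof of Proposition \ref{prop:penalty-shape} already gives \(0<t_a-r\leq 2re^{-a(r-1)}\to0\).

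For \(0<r<1\), I will show \(t_a\to1\). Suppose first that \(t_a\leq1-\delta\) along a subsequence. Then \(\ell_a(t_a)\leq e^{-a\delta}\), so \(r\leq t_a e^{-a\delta}\to0\), a contradiction. Suppose instead that \(t_a\geq1+\delta\). Then \(\ell_a(t_a)\to1\) uniformly, so \(r=t_a\ell_a(t_a)\geq(1+\delta)(1-e^{-a\delta})\), which exceeds \(r\) for large \(a\), again a contradiction. Hence \(t_a\to1\) and \(d_a(r)\to1-r\).

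The endpoint \(r=0\) gives \(d_a(0)=0=d_\infty(0)\). The point \(r=1\) is a single point and is irrelevant for the integral. Alternatively, one could read off the same limits from the Lambert form \eqref{eq:phi-prime-lambert}: for \(r<1\), the argument \(are^{a(1-r)}\) is huge, and \(W_0(z)=\log z-\log\log z+o(1)\) gives \(\tfrac1aW_0\to1-r\). The direct argument above avoids asymptotics of \(W_0\), so I prefer it.

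\textbf{Domination.} I need a bound on \(d_a\) that is uniform in \(a\), at least for \(a\geq1\).

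For \(r\leq1\), the claim is \(t_a\leq 1+r\). If \(t_a>1\), then \(\ell_a(t_a)\geq\tfrac12\) and \(t_a\leq2r\leq2\). Hence \(t_a\leq\max(1,2r)\leq2\) and \(0\leq d_a(r)\leq2\).

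For \(r\geq1\), the previous bound gives \(d_a(r)\leq 2r\). On \([0,v]\), the integrand is therefore bounded by the constant \(2\max(1,v)\), and dominated convergence applies.

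\textbf{Main obstacle.} The main difficulty is the interior regime \(0<r<1\). There the inverse does not stay near \(r\) but instead jumps to the transition point \(1\). This is exactly what produces the \(\lambda u-\tfrac12u^2\) branch of the limit, and the two-sided contradiction argument above must be written carefully, with \(\delta\) fixed and \(a\to\infty\). Everything else reduces to the bounds already proved in Propositions \ref{prop:bias} and \ref{prop:penalty-shape}.
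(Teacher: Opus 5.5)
Your proposal is correct and follows the paper's proof essentially step for step: reduce to \(p_a\), show \(h_a^{-1}(r)-r\to(1-r)_+\) by excluding limits of \(t_a\) away from \(1\) when \(0<r<1\) and by saturation when \(r>1\), then pass to the limit by dominated convergence using \(t_a\leq\max\{1,2r\}\). One harmless slip: under your own definition \(d_\infty(0)=1\neq0=d_a(0)\), so \(r=0\), like \(r=1\), is simply a null exceptional point for the integral, as the paper remarks.
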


\begin{proof}
Fix \(v>0\), let \(t_a=h_a^{-1}(v)\), and note that \(t_a>v\).  The sequence is bounded: if \(t_a<1\), then \(t_a\leq1\); if \(t_a\geq1\), then
\(
v=t_a\ell_a(t_a)\geq t_a/2
\),
so \(t_a\leq2v\).

If \(0<v<1\), any subsequential limit below one would force \(\ell_a(t_a)\to0\) and hence \(v=t_a\ell_a(t_a)\to0\); any limit above one would force \(\ell_a(t_a)\to1\) and hence a limit equal to \(v>1\).  Both are impossible, so \(t_a\to1\).  If \(v=1\), the same argument gives \(t_a\to1\).  If \(v>1\), then \(t_a>v>1\), so \(\ell_a(t_a)\to1\) and \(t_a\to v\).  Therefore, for every \(v>0\),
\begin{equation}\label{eq:inverse-gap-limit}
    h_a^{-1}(v)-v\longrightarrow(1-v)_+.
\end{equation}
At \(v=0\), the left side is instead identically zero; this single endpoint does not affect any integral.

For fixed \(V<\infty\) and \(0\leq r\leq V\), the preceding bound gives
\(
0\leq h_a^{-1}(r)-r\leq\max\{1,2V\}
\).
Dominated convergence, using \eqref{eq:inverse-gap-limit} almost everywhere on \([0,V]\), yields
\[
    p_a(v)\to\int_0^v(1-r)_+\,dr
    =\begin{cases}
       v-\tfrac12v^2,&0\leq v\leq1,\\
       \tfrac12,&v\geq1.
    \end{cases}
\]
Substitution into \eqref{eq:scaled-penalty} gives \eqref{eq:hard-penalty-limit}.
\end{proof}

Pointwise convergence of the penalties alone does not determine the minimizers of the corresponding proximal objectives.  To identify the variational meaning of the limiting penalty, we therefore solve its scalar proximal problem directly.

\begin{proposition}[Proximal map of the limiting penalty]\label{prop:hard-limit-prox}
If \(s\neq\lambda\), then
\(
\prox_{\phi_{\infty,\lambda}}(s)=H_\lambda(s)
\).
If \(s=\lambda\), every \(u\in[0,\lambda]\) is a minimizer.
\end{proposition}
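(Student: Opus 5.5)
The plan is to compute the limiting proximal objective
\[
    G_s(u)=\tfrac12(u-s)^2+\phi_{\infty,\lambda}(u),\qquad u\geq0,
\]
piece by piece, using the explicit formula from Theorem \ref{thm:hard-penalty-limit}, and then compare the minima of the two pieces.

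On \([0,\lambda]\) we have
\[
    G_s(u)=\tfrac12u^2-su+\tfrac12s^2+\lambda u-\tfrac12u^2=(\lambda-s)u+\tfrac12s^2.
\]
This is affine in \(u\), so its behaviour depends only on the sign of \(\lambda-s\).
\begin{itemize}
    \item If \(s<\lambda\), the slope is positive and the minimum over \([0,\lambda]\) is attained only at \(u=0\), with value \(\tfrac12s^2\).
    \item If \(s>\lambda\), the minimum over this interval is at \(u=\lambda\), with value \(\lambda^2-s\lambda+\tfrac12s^2\).
    \item If \(s=\lambda\), the objective is the constant \(\tfrac12\lambda^2\) on \([0,\lambda]\).
\end{itemize}

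On \([\lambda,\infty)\) we have \(G_s(u)=\tfrac12(u-s)^2+\tfrac12\lambda^2\).
\begin{itemize}
    \item If \(s\geq\lambda\), this piece is minimized uniquely at \(u=s\), with value \(\tfrac12\lambda^2\).
    \item If \(s<\lambda\), it is minimized at \(u=\lambda\), with value \(\tfrac12(\lambda-s)^2+\tfrac12\lambda^2\).
\end{itemize}

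Next compare the two pieces case by case.

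For \(s<\lambda\), compare \(\tfrac12s^2\) with \(\tfrac12(\lambda-s)^2+\tfrac12\lambda^2\). The difference is
\[
    \tfrac12(\lambda-s)^2+\tfrac12\lambda^2-\tfrac12s^2=\lambda(\lambda-s)>0.
\]
So \(u=0\) is the unique global minimizer, and \(0=H_\lambda(s)\).

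For \(s>\lambda\), compare \(\tfrac12\lambda^2\) with \(\lambda^2-s\lambda+\tfrac12s^2\). The difference is \(\tfrac12(s-\lambda)^2>0\). Hence \(u=s\) is the unique minimizer, since the left piece attains only its larger value at the shared point \(\lambda\). This gives \(s=H_\lambda(s)\).

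For \(s=\lambda\), both pieces give the value \(\tfrac12\lambda^2\). The left piece is constant on \([0,\lambda]\). The right piece exceeds \(\tfrac12\lambda^2\) for \(u>\lambda\). Therefore the minimizer set is exactly \([0,\lambda]\).

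No step here is a real obstacle. The only care needed is bookkeeping at the junction \(u=\lambda\), which belongs to both pieces, so that uniqueness is not lost or claimed wrongly there. Continuity of \(\phi_{\infty,\lambda}\) at \(\lambda\), where both formulas give \(\tfrac12\lambda^2\), ensures the piecewise minimization is consistent.
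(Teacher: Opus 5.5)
Your proposal is correct and follows the paper's own proof: split the objective at \(u=\lambda\) into the affine piece \((\lambda-s)u+\tfrac12s^2\) and the quadratic piece \(\tfrac12(u-s)^2+\tfrac12\lambda^2\), then compare their minima. Your explicit gaps \(\lambda(\lambda-s)\) and \(\tfrac12(s-\lambda)^2\), and your observation that the minimizer set at \(s=\lambda\) is exactly \([0,\lambda]\), fill in the comparison that the paper leaves to the reader.
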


\begin{proof}
For \(0\leq u\leq\lambda\),
\[
    \frac12(u-s)^2+\phi_{\infty,\lambda}(u)
    =\frac12s^2+(\lambda-s)u,
\]
which is affine in \(u\).  For \(u\geq\lambda\), the objective is
\(
\tfrac12(u-s)^2+\tfrac12\lambda^2
\).
Comparing the two pieces gives the stated alternatives.
\end{proof}

The scalar construction is therefore complete.  It begins with a continuous shrinker, produces an explicit saturating penalty, and recovers hard thresholding only in the singular limit \(a\to\infty\).  The next question is how much stability is lost as this limit is approached.

\section{Hardness, Sensitivity, and Exact Weak Convexity}\label{sec:weak-convexity}

\subsection{Maximum slope}

The hard-threshold limit reduces bias, but Proposition \ref{prop:hard-limit-shrinker} alone does not describe its stability cost.  That cost is governed by the steepest part of the transition.  We therefore define
\begin{equation}\label{eq:La-def}
    L_a=\sup_{t\geq0}h_a'(t).
\end{equation}
Because scaling by \(\lambda\) only changes the horizontal and vertical units together, the same quantity will control the Lipschitz constant of every \(f_{a,\lambda}\).

\begin{proposition}[Existence and scaling of the maximum slope]\label{prop:La}
For every \(a>0\),
\begin{equation}\label{eq:La-properties}
    1<L_a<\infty,
    \qquad
    \Lip(f_{a,\lambda})=L_a,
\end{equation}
and the supremum is attained at a finite point.  Furthermore,
\begin{equation}\label{eq:La-lower}
    L_a\geq h_a'(1)=\frac12+\frac a4,
\end{equation}
so \(L_a\to\infty\) as \(a\to\infty\).
\end{proposition}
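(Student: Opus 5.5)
The plan is to work directly from the derivative formula \eqref{eq:h-prime} in Lemma~\ref{lem:monotone}, $h_a'(t)=\ell_a(t)+at\,\ell_a(t)(1-\ell_a(t))$, which is continuous on $[0,\infty)$.

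\emph{Lower bound and $L_a>1$.} At $t=1$ we have $\ell_a(1)=1/2$, so $h_a'(1)=\tfrac12+\tfrac a4$, which gives \eqref{eq:La-lower} and $L_a\to\infty$. For $L_a>1$ I would not rely on $\tfrac12+\tfrac a4$, since it exceeds one only when $a>2$. Instead I would use that $h_a(t)-t=-t/(1+e^{a(t-1)})\to0$ as $t\to\infty$ while $h_a(0)-0=0$ and $h_a(t)<t$ for $t>0$. Hence $h_a(t)-t$ cannot be nonincreasing on all of $[0,\infty)$: it starts at $0$, becomes strictly negative, and returns to $0$ at infinity. So somewhere $h_a'(t)>1$, and $L_a>1$ strictly. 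Alternatively, I could note that $h_a'(t)-1\to0^+$ from above for large $t$, since $1-\ell_a\sim e^{-a(t-1)}$ and $at(1-\ell_a)$ dominates $1-\ell_a$ there. The first argument is cleaner.

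\emph{Finiteness and attainment.} Since $\ell_a\in(0,1)$ and $t(1-\ell_a(t))\le te^{-a(t-1)}\to0$, we get $h_a'(t)\to1$ as $t\to\infty$, and $h_a'$ is bounded. Because $\sup h_a'>1=\lim_{t\to\infty}h_a'(t)$, there is $T$ with $h_a'<\tfrac12(1+L_a)$ on $[T,\infty)$. The supremum is therefore the maximum of the continuous function $h_a'$ on the compact set $[0,T]$. It is attained and finite.

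\emph{Lipschitz scaling.} From $f_{a,\lambda}(s)=\lambda h_a(s/\lambda)$ we get $f_{a,\lambda}'(s)=h_a'(s/\lambda)$. For a $C^1$ function on an interval, the Lipschitz constant equals $\sup|f'|$, and $h_a'>0$, so $\Lip(f_{a,\lambda})=\sup_t h_a'(t)=L_a$.

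The main delicate point is showing that $L_a>1$ for small $a$, where the value at $t=1$ is insufficient. The mean-value argument above, which uses that $h_a(t)-t$ is negative but vanishes both at $0$ and at infinity, settles it.
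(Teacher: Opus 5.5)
Your proposal is correct and follows essentially the same route as the paper. The paper also uses $h_a'(t)\to1$ for boundedness and attainment on a compact interval, and a mean-value argument on $d(t)=t-h_a(t)$ (positive for $t>0$, vanishing at $0$ and at infinity) to get $L_a>1$; it likewise uses the chain rule $f_{a,\lambda}'(s)=h_a'(s/\lambda)$ for the Lipschitz constant and evaluation at $t=1$ for the lower bound, and your explicit choice of $T$ with $h_a'<\tfrac12(1+L_a)$ on $[T,\infty)$ simply makes precise the paper's remark that a value exceeding one cannot be approached only at infinity.
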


\begin{proof}
Equation \eqref{eq:h-prime} is continuous and satisfies \(h_a'(t)\to1\) as \(t\to\infty\).  Hence it is bounded.  To prove that it exceeds one, set \(d(t)=t-h_a(t)>0\) for \(t>0\).  For any fixed \(t_0>0\), \(d(t_0)>0\), while \(d(t)\to0\).  For sufficiently large \(T\), \(d(T)<d(t_0)\), so the mean value theorem gives a point with \(d'(t)<0\), equivalently \(h_a'(t)>1\).  Since \(h_a'\to1\), a value strictly larger than one cannot be approached only at infinity; continuity therefore implies that the global maximum is attained on a compact interval.

Differentiating \(f_{a,\lambda}(s)=\lambda h_a(s/\lambda)\) gives
\(
f_{a,\lambda}'(s)=h_a'(s/\lambda)
\),
so its exact Lipschitz constant is \(L_a\), independently of \(\lambda\).  Finally, substituting \(t=1\) into \eqref{eq:h-prime} yields \eqref{eq:La-lower}.
\end{proof}

For numerical evaluation one may differentiate once more.  With \(\ell=\ell_a(t)\),
\begin{equation}\label{eq:h-second}
    h_a''(t)
    =a\ell(1-\ell)\bigl[2+at(1-2\ell)\bigr].
\end{equation}
Thus any interior maximizer of \(h_a'\) satisfies the scalar equation
\begin{equation}\label{eq:max-slope-root}
    2+at(1-2\ell_a(t))=0.
\end{equation}
No monotonicity of \(a\mapsto L_a\) is needed for the theory.  The rigorous conclusion relevant to the hard-threshold limit is the divergence supplied by \eqref{eq:La-lower}: increasingly sharp approximations necessarily admit increasingly large worst-case slopes.

\subsection{Exact scalar weak convexity}

The maximum slope also determines the nonconvexity of the induced penalty.  Recall that a function \(\psi\) on a Euclidean space is \(\rho\)-weakly convex when
\(
\psi(x)+\tfrac\rho2\norm{x}^2
\)
is convex.  The smallest valid \(\rho\) is precisely the quadratic curvature needed to compensate for the most concave part of \(\psi\).

\begin{theorem}[Exact weak-convexity constant]\label{thm:scalar-weak-convexity}
Define
\begin{equation}\label{eq:rho-def}
    \rho_a=1-\frac1{L_a}\in(0,1).
\end{equation}
Then \(p_a\) and \(\phi_{a,\lambda}\) are \(\rho_a\)-weakly convex, and no smaller constant is valid.  The constant is independent of \(\lambda\).
\end{theorem}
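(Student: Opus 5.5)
The plan is to work with the second-derivative formula \eqref{eq:phi-second} and reduce everything to the normalized penalty \(p_a\).

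\textbf{Reduction to \(p_a\).} Since \(\phi_{a,\lambda}(u)=\lambda^2p_a(u/\lambda)\), we have \(\phi_{a,\lambda}''(u)=p_a''(u/\lambda)\). Hence \(\phi_{a,\lambda}+\tfrac\rho2u^2\) is convex on \([0,\infty)\) exactly when \(p_a+\tfrac\rho2v^2\) is. This already gives independence of \(\lambda\), so it suffices to treat \(p_a\).

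\textbf{Regularity of \(p_a\).} By \eqref{eq:h-prime}, \(h_a'>0\), and \(h_a\) is \(C^\infty\). The inverse function theorem therefore makes \(h_a^{-1}\) a \(C^1\) map on \([0,\infty)\). Consequently \(p_a\) is \(C^2\) there, with
\[
p_a''(v)=\frac{1}{h_a'(h_a^{-1}(v))}-1,
\]
which is the normalized form of \eqref{eq:phi-second}.

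\textbf{Upper bound: \(\rho_a\) works.} A \(C^2\) function on an interval is convex if and only if its second derivative is nonnegative. So \(p_a+\tfrac\rho2 v^2\) is convex if and only if
\[
\rho\ge 1-\frac{1}{h_a'(h_a^{-1}(v))}\quad\text{for all } v\ge0.
\]
Lemma \ref{lem:monotone} says \(h_a^{-1}\) is a bijection of \([0,\infty)\), so as \(v\) ranges over \([0,\infty)\), the point \(t=h_a^{-1}(v)\) also ranges over all of \([0,\infty)\). The condition therefore becomes \(\rho\ge\sup_{t\ge0}\bigl(1-1/h_a'(t)\bigr)\). Because \(x\mapsto 1-1/x\) is increasing on \((0,\infty)\), this supremum equals \(1-1/\sup_t h_a'(t)=1-1/L_a=\rho_a\). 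Thus \(\rho=\rho_a\) suffices.

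\textbf{Sharpness.} By Proposition \ref{prop:La}, the supremum defining \(L_a\) is attained at some finite \(t^\star\). Set \(v^\star=h_a(t^\star)\). For any \(\rho<\rho_a\),
\[
p_a''(v^\star)+\rho=\rho-\rho_a<0,
\]
and by continuity this remains negative on a neighbourhood of \(v^\star\). There \(p_a+\tfrac\rho2v^2\) is strictly concave, so it is not convex, and no smaller constant is valid. Proposition \ref{prop:La} also gives \(1<L_a<\infty\), hence \(\rho_a\in(0,1)\).

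\textbf{Domain issues.} The functions live on \([0,\infty)\), so convexity is understood on that interval. If the paper means the symmetric extension \(\phi(|x|)\) on \(\mathbb R\), the point \(0\) needs a check. There \(p_a'(0)=0\), and \(p_a''(0)=1/h_a'(0)-1=1/\ell_a(0)-1=e^{a}>0\). So the even extension is \(C^1\) at \(0\) with nondecreasing derivative across it, and convexity is preserved.

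\textbf{Main obstacle.} The delicate step is sharpness. It relies on \(L_a\) being attained and not merely approached at infinity, which Proposition \ref{prop:La} supplies. Even without attainment, the same argument goes through using a sequence \(t_n\) with \(h_a'(t_n)\to L_a\).
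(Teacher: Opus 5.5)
Your proof is correct and follows the paper's argument. You bound the curvature formula $\phi_{a,\lambda}''(u)=1/f_{a,\lambda}'(f_{a,\lambda}^{-1}(u))-1$ below by $1/L_a-1$ using the maximal slope, obtain sharpness at the image of the slope maximizer, and get $\lambda$-independence from $\phi_{a,\lambda}''(u)=p_a''(u/\lambda)$. Your extra $C^2$ regularity check, your remark that the ``if and only if'' characterization gives sharpness even without attainment, and your even-extension check at $0$ are valid refinements that the paper leaves implicit.
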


\begin{proof}
From \eqref{eq:phi-second} and Proposition \ref{prop:La},
\begin{equation}\label{eq:phi-curvature-lower}
    \phi_{a,\lambda}''(u)
    \geq\frac1{L_a}-1=-\rho_a.
\end{equation}
Therefore \(\phi_{a,\lambda}(u)+\rho_a u^2/2\) is convex.  Let \(t_a\) attain the maximum \(h_a'(t_a)=L_a\), and set \(s_a=\lambda t_a\) and \(u_a=f_{a,\lambda}(s_a)\).  Then
\[
    \phi_{a,\lambda}''(u_a)
    =\frac1{f_{a,\lambda}'(s_a)}-1
    =\frac1{L_a}-1=-\rho_a.
\]
Any smaller constant leaves negative curvature at \(u_a\).  The normalized case is identical, and the scaling identity
\(
\phi_{a,\lambda}''(u)=p_a''(u/\lambda)
\)
shows that \(\lambda\) cancels.
\end{proof}

Combining \eqref{eq:unit-objective-curvature} with the definition of \(L_a\) yields the exact geometric identity
\begin{equation}\label{eq:three-geometries}
    \boxed{\begin{aligned}
    \sup_{s\geq0}f_{a,\lambda}'(s)&=L_a,\\
    \inf_{u\geq0}\phi_{a,\lambda}''(u)&=-\rho_a,\\
    \inf_{u\geq0}G_s''(u)&=\frac1{L_a}.
    \end{aligned}}
\end{equation}
The three quantities are therefore not separate tuning parameters.  They express the same transition geometry as perturbation sensitivity, negative penalty curvature, and residual curvature of the unit proximal objective.  This common constant is what allows the scalar construction to be lifted to matrices without losing control of proximal uniqueness.

\section{Spectral Regularization and Scaled Proximal Maps}

\subsection{Matrix weak convexity}

Having analysed the scalar penalty, we now generalize it to a matrix penalty acting through the singular values.  We therefore set
\begin{equation}\label{eq:spectral-penalty}
    \Phi_{a,\lambda}(X)
    =\sum_{i=1}^q\phi_{a,\lambda}(\sigma_i(X)).
\end{equation}

\begin{theorem}[Exact matrix weak convexity]\label{thm:matrix-weak-convexity}
The spectral function \(\Phi_{a,\lambda}\) is \(\rho_a\)-weakly convex on \(\R^{M\times N}\), and \(\rho_a\) is the smallest valid constant.
\end{theorem}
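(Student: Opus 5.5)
The plan is to reduce matrix weak convexity to convexity of a spectral function via Lewis's theorem \cite{lewis1995,lewissendov2005}, and to obtain sharpness from a rank-one test direction. Since \(\norm{X}_F^2=\sum_i\sigma_i(X)^2\), we have
\[
    \Phi_{a,\lambda}(X)+\frac{\rho_a}{2}\norm{X}_F^2
    =\sum_{i=1}^q\psi(\sigma_i(X)),
    \qquad
    \psi(u)=\phi_{a,\lambda}(u)+\frac{\rho_a}{2}u^2 .
\]
So it suffices to show that the absolutely symmetric function \(F(x)=\sum_i\psi(\abs{x_i})\) on \(\R^q\) is convex. The standard principle is that a unitarily invariant matrix function \(F\circ\sigma\) is convex if and only if the absolutely symmetric function \(F\) is convex.

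For convexity of \(F\), it is enough to show that the even extension \(u\mapsto\psi(\abs{u})\) is convex on \(\R\), since a separable sum of convex functions is convex. By Theorem \ref{thm:scalar-weak-convexity}, \(\psi\) is convex on \([0,\infty)\). The even extension of a convex function on \([0,\infty)\) is convex on \(\R\) if and only if its right derivative at \(0\) is nonnegative. Here \(\psi'(0)=\phi_{a,\lambda}'(0)+0=0\) by Proposition \ref{prop:penalty-shape}, and \(\psi'\) is nondecreasing. Hence \(\psi(\abs{\cdot})\) is \(C^1\) with nondecreasing derivative, so it is convex, and Lewis's theorem gives \(\rho_a\)-weak convexity of \(\Phi_{a,\lambda}\). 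I expect this even-extension step to be the main obstacle. It is the only place where the behaviour of the penalty at the origin matters: a spectral penalty with \(\phi'(0)<0\) would fail to be weakly convex at rank-deficient matrices. The identity \(\phi_{a,\lambda}'(0)=0\), which comes from \(f_{a,\lambda}^{-1}(0)=0\), is exactly what makes the step work.

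For sharpness, I will suppose \(\Phi_{a,\lambda}\) is \(\rho\)-weakly convex with \(\rho<\rho_a\) and restrict to the line \(X(u)=uE_{11}\), \(u\geq0\), where \(E_{11}\) is the first matrix unit. On this line \(\sigma(X(u))=(u,0,\ldots,0)\), so \(\Phi_{a,\lambda}(X(u))+\tfrac\rho2\norm{X(u)}_F^2=\phi_{a,\lambda}(u)+\tfrac\rho2u^2\) up to the constant \((q-1)\phi_{a,\lambda}(0)=0\). Restricting a convex function to a line keeps it convex. However, at \(u_a=f_{a,\lambda}(\lambda t_a)\), the point from the proof of Theorem \ref{thm:scalar-weak-convexity}, the second derivative is \(-\rho_a+\rho<0\), since \(\phi_{a,\lambda}\) is \(C^2\) near \(u_a>0\). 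This contradicts convexity, so \(\rho_a\) is the smallest valid constant.
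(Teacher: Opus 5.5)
Your proposal is correct and follows essentially the same route as the paper: the identity \(\Phi_{a,\lambda}+\tfrac{\rho_a}{2}\norm{\cdot}_F^2=\sum_i\psi(\sigma_i(\cdot))\) combined with Lewis's convexity theorem, and sharpness by restricting to a rank-one line through the scalar worst-curvature point. The only difference is how Lewis's hypothesis is checked: the paper shows \(\psi\) is nondecreasing on all of \([0,\infty)\) via \(f_{a,\lambda}^{-1}(u)\geq u\) and \(L_a>1\), whereas you check \(\psi'(0)=0\) and let convexity of \(\psi\) supply monotonicity, which is an equivalent condition.
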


\begin{proof}
Set
\(
r_{a,\lambda}(u)=\phi_{a,\lambda}(u)+\rho_a u^2/2
\).
Theorem \ref{thm:scalar-weak-convexity} makes \(r_{a,\lambda}\) convex.  It is also nondecreasing because
\begin{equation}\label{eq:r-prime}
    r_{a,\lambda}'(u)
    =f_{a,\lambda}^{-1}(u)-\frac{u}{L_a}\geq0.
\end{equation}
Indeed, \(f_{a,\lambda}^{-1}(u)\geq u\) and \(L_a>1\).  A convex nondecreasing function of nonnegative singular values generates a convex unitarily invariant function \cite{lewis1995}.  Since
\[
    \Phi_{a,\lambda}(X)+\frac{\rho_a}{2}\norm{X}_F^2
    =\sum_{i=1}^q r_{a,\lambda}(\sigma_i(X)),
\]
matrix weak convexity follows.

For sharpness, fix unit vectors \(b\in\R^M\) and \(c\in\R^N\), and restrict the matrix function to the rank-one line \(X=u bc^T\).  For \(u>0\), this restriction equals \(\phi_{a,\lambda}(u)\).  A smaller matrix weak-convexity constant would therefore give a smaller scalar constant in a neighbourhood of the scalar worst-curvature point, contradicting Theorem \ref{thm:scalar-weak-convexity}.
\end{proof}

\subsection{Arbitrarily scaled scalar proximal map}

Theorem \ref{thm:unit-prox-scalar} establishes the intended shrinker at unit scale.  In proximal gradient the scale \(\beta\) generally differs from one, so the corresponding scaled problem must be analysed separately.  Its single-valuedness is again controlled by the weak-convexity constant.

\begin{theorem}[Scaled scalar proximal map]\label{thm:scaled-scalar-prox}
Let \(\beta>0\) satisfy
\begin{equation}\label{eq:scaled-prox-condition}
    \beta\rho_a<1.
\end{equation}
Define
\begin{equation}\label{eq:Tbeta}
    T_\beta(u)
    =u+\beta\phi_{a,\lambda}'(u)
    =\beta f_{a,\lambda}^{-1}(u)+(1-\beta)u.
\end{equation}
Then \(T_\beta:[0,\infty)\to[0,\infty)\) is a strictly increasing bijection, and
\begin{equation}\label{eq:scaled-scalar-prox}
    u_\beta(s)
    :=\prox_{\beta\phi_{a,\lambda}}(s)
    =T_\beta^{-1}(s).
\end{equation}
The scalar objective is \((1-\beta\rho_a)\)-strongly convex, the solution is unique, and
\begin{equation}\label{eq:scaled-prox-bounds}
    0\leq u_\beta(s)\leq s.
\end{equation}
Moreover, \(u_\beta\) is increasing and globally Lipschitz with constant at most \((1-\beta\rho_a)^{-1}\).
\end{theorem}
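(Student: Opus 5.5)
The plan is to run the whole argument through the derivative of the scalar objective \(G^\beta_s(u)=\tfrac12(u-s)^2+\beta\phi_{a,\lambda}(u)\) on \([0,\infty)\).

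\emph{Strong convexity.} By \eqref{eq:phi-prime-inverse}, for \(u>0\),
\[
(G^\beta_s)'(u)=u-s+\beta\phi_{a,\lambda}'(u)=T_\beta(u)-s .
\]
The second form of \(T_\beta\) in \eqref{eq:Tbeta} is then immediate from \eqref{eq:phi-prime-inverse}. Using \eqref{eq:phi-curvature-lower},
\[
T_\beta'(u)=1+\beta\phi_{a,\lambda}''(u)\ge 1-\beta\rho_a>0 .
\]
Here I use that \(\phi_{a,\lambda}\) is \(C^2\) on \((0,\infty)\), since \(f_{a,\lambda}\) is smooth with positive derivative. The curvature bound gives \((1-\beta\rho_a)\)-strong convexity of \(G^\beta_s\) on \([0,\infty)\). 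An equivalent route is that \(\beta\phi_{a,\lambda}+\tfrac{\beta\rho_a}{2}u^2\) is convex by Theorem~\ref{thm:scalar-weak-convexity}, and the remaining quadratic \(\tfrac{1-\beta\rho_a}{2}u^2\) supplies the modulus.

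\emph{Bijectivity.} \(T_\beta\) is continuous on \([0,\infty)\), with \(T_\beta(0)=0\) because \(f_{a,\lambda}^{-1}(0)=0\). Its derivative bound gives strict increase. Moreover \(T_\beta(u)\ge(1-\beta\rho_a)u\to\infty\), so \(T_\beta\) is onto \([0,\infty)\).

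\emph{Identifying the proximal map.} A strongly convex continuous function on the closed half-line has a unique minimizer. If \(s>0\), the point \(u=T_\beta^{-1}(s)>0\) is interior and stationary, so it is the minimizer. If \(s=0\), then \((G^\beta_0)'(u)=T_\beta(u)>0\) for \(u>0\), so the minimizer is \(0=T_\beta^{-1}(0)\). This mirrors the proof of Theorem~\ref{thm:unit-prox-scalar}.

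\emph{Bounds.} The bound \eqref{eq:scaled-prox-bounds} follows from \(T_\beta(u)\ge u\), which holds because \(\phi_{a,\lambda}'\ge0\) by Proposition~\ref{prop:penalty-shape}. Hence \(s=T_\beta(u_\beta(s))\ge u_\beta(s)\), while nonnegativity is built into the domain.

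\emph{Monotonicity and Lipschitz constant.} \(u_\beta\) is increasing as the inverse of a strictly increasing map. For the Lipschitz bound, take \(s_1<s_2\) with \(u_i=u_\beta(s_i)\). The mean value theorem on \(T_\beta\), or integrating \(T_\beta'\ge1-\beta\rho_a\), gives
\[
s_2-s_1=T_\beta(u_2)-T_\beta(u_1)\ge(1-\beta\rho_a)(u_2-u_1).
\]
This yields the constant \((1-\beta\rho_a)^{-1}\).

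\emph{Main obstacle.} The only delicate point is regularity at \(u=0\). One needs \(T_\beta\) continuous there, and the derivative bound must hold on \((0,\infty)\) only, which suffices for the integrated inequality. Continuity follows from that of \(f_{a,\lambda}^{-1}\), which is given by the Lambert formula \eqref{eq:f-inverse}. I would also remark that \(\beta\) may exceed \(1\): the coefficient \(1-\beta\) in \eqref{eq:Tbeta} can then be negative, so monotonicity must come from the curvature bound rather than from termwise inspection.
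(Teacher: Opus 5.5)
Your proposal is correct and follows essentially the same route as the paper. Both arguments use the curvature bound \(T_\beta'=1+\beta\phi_{a,\lambda}''\geq1-\beta\rho_a\) for strong convexity and strict monotonicity, \(T_\beta(0)=0\) together with unbounded growth for bijectivity, \(\phi_{a,\lambda}'\geq0\) for \(0\leq u_\beta(s)\leq s\), and the inverse-derivative bound for the Lipschitz constant; your separate treatment of the boundary case \(s=0\) and of continuity at \(u=0\) only makes explicit what the paper leaves implicit.
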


\begin{proof}
The scaled objective
\(
J_s(u)=\tfrac12(u-s)^2+\beta\phi_{a,\lambda}(u)
\)
has curvature
\begin{equation}\label{eq:scaled-objective-curvature}
    J_s''(u)=1+\beta\phi_{a,\lambda}''(u)
    \geq1-\beta\rho_a>0.
\end{equation}
Its first-order equation is \(T_\beta(u)=s\), so we just need to show that \(T_\beta(u)\) is invertible to derive its unique solution.

Notice
\(
T_\beta'(u)=J_s''(u)\geq1-\beta\rho_a
\),
so \(T_\beta\) is strictly increasing, with \(T_\beta(0)=0\) and \(T_\beta(u)\geq u\to\infty\).  It is therefore bijective.  The equation
\(
s-u=\beta\phi_{a,\lambda}'(u)\geq0
\)
gives \eqref{eq:scaled-prox-bounds}.  Monotonicity follows from inversion, and the derivative lower bound for \(T_\beta\) gives the stated inverse Lipschitz bound.
\end{proof}

Using \eqref{eq:phi-prime-lambert}, the root equation can be evaluated as
\begin{equation}\label{eq:Tbeta-lambert}
    T_\beta(u)
    =u+\frac{\beta\lambda}{a}
    W_0\!\left(\frac{au}{\lambda}e^{a(1-u/\lambda)}\right)=s.
\end{equation}
The unique root lies in \([0,s]\).  Bisection is globally convergent and requires at most
\(
\lceil\log_2(s/\epsilon_{\mathrm{root}})\rceil
\)
iterations to reduce the bracket length below \(\epsilon_{\mathrm{root}}\) when \(s>0\).  A safeguarded Newton method may be used for acceleration.  Large exponential terms should be evaluated in the log domain.

\subsection{Matrix proximal map}

\begin{theorem}[Scaled spectral proximal map]\label{thm:scaled-matrix-prox}
Assume \(\beta\rho_a<1\), and let
\(
Y=U\diagop(s_1,\ldots,s_q)V^T
\)
with \(s_1\geq\cdots\geq s_q\geq0\).  Then
\begin{equation}\label{eq:scaled-matrix-prox}
    \prox_{\beta\Phi_{a,\lambda}}(Y)
    =U\diagop\bigl(u_\beta(s_1),\ldots,u_\beta(s_q)\bigr)V^T.
\end{equation}
The matrix proximal subproblem has a unique global minimizer.
\end{theorem}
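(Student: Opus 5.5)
The plan is to reduce the matrix subproblem to a scalar one in two stages: first use strong convexity to get a unique minimizer, then use a singular-value inequality to identify that minimizer with the componentwise scalar map.

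\textbf{Existence and uniqueness.} I will write the matrix objective as
\[
\Psi_Y(X)=\tfrac12\norm{X-Y}_F^2+\beta\Phi_{a,\lambda}(X).
\]
By Theorem~\ref{thm:matrix-weak-convexity}, \(\beta\Phi_{a,\lambda}+\tfrac{\beta\rho_a}{2}\norm{\cdot}_F^2\) is convex. Hence \(\Psi_Y\) is the sum of a convex function and \(\tfrac{1-\beta\rho_a}{2}\norm{X}_F^2\) plus an affine term, so it is \((1-\beta\rho_a)\)-strongly convex. It is also continuous, since \(\phi_{a,\lambda}\) is continuous and singular values depend continuously on \(X\). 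A continuous strongly convex function on a finite-dimensional space has exactly one global minimizer, which settles uniqueness.

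\textbf{Reduction to singular values.} I will expand
\[
\norm{X-Y}_F^2=\norm{X}_F^2-2\inner{X}{Y}_F+\norm{Y}_F^2
\]
and apply von Neumann's trace inequality, \(\inner{X}{Y}_F\le\sum_i\sigma_i(X)s_i\). This gives
\[
\Psi_Y(X)\ge\sum_{i=1}^q\Bigl[\tfrac12(\sigma_i(X)-s_i)^2+\beta\phi_{a,\lambda}(\sigma_i(X))\Bigr].
\]
Equality holds when \(X\) shares the singular vectors \(U,V\) of \(Y\) with its singular values in matching order. The right-hand side is then bounded below by
\[
\sum_i\min_{u\ge0}J_{s_i}(u),
\]
and by Theorem~\ref{thm:scaled-scalar-prox} each scalar minimum is attained uniquely at \(u_\beta(s_i)\).

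\textbf{Checking the candidate is admissible.} Set \(\hat X=U\diagop(u_\beta(s_i))V^T\). Theorem~\ref{thm:scaled-scalar-prox} says \(u_\beta\) is increasing and nonnegative. The sequence \(u_\beta(s_1)\ge\cdots\ge u_\beta(s_q)\ge0\) is therefore correctly ordered, so these numbers are exactly the singular values of \(\hat X\). The von Neumann bound is then tight at \(\hat X\), and \(\Psi_Y(\hat X)\) equals the lower bound, so \(\hat X\) is a global minimizer. By the uniqueness step it is the minimizer, which proves \eqref{eq:scaled-matrix-prox}.

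Because uniqueness comes from strong convexity, I never need to analyse the equality case of von Neumann's inequality. The same argument also shows that the formula does not depend on which SVD of \(Y\) is chosen when singular values are repeated. The one point needing care is the monotonicity of \(u_\beta\): without it the reordering would break the equality in the trace bound. That monotonicity is already supplied by Theorem~\ref{thm:scaled-scalar-prox}.
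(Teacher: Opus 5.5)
Your proposal is correct and follows essentially the same route as the paper: von Neumann's trace inequality reduces the objective to the separable scalar problems solved by Theorem~\ref{thm:scaled-scalar-prox}, monotonicity of \(u_\beta\) keeps the candidate singular values ordered so the lower bound is attained, and the \((1-\beta\rho_a)\)-strong convexity inherited from Theorem~\ref{thm:matrix-weak-convexity} gives uniqueness even when \(Y\) has repeated singular values. Putting uniqueness first and then checking directly that \(\hat X\) attains the lower bound only spells out the paper's ``the problem therefore reduces to'' step more explicitly.
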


\begin{proof}
Let \(X=P\diagop(x_1,\ldots,x_q)Q^T\), where \(x_1\geq\cdots\geq x_q\geq0\).  Expanding the objective gives
\begin{align*}
    \frac12\norm{X-Y}_F^2+\beta\Phi_{a,\lambda}(X)
    &=\frac12\sum_i x_i^2+\frac12\sum_i s_i^2
      -\inner{X}{Y}_F\\
    &\quad+\beta\sum_i\phi_{a,\lambda}(x_i).
\end{align*}
For fixed singular values \((x_i)\), von Neumann's trace inequality gives
\(
\inner{X}{Y}_F\leq\sum_i x_i s_i
\),
with equality when the singular vectors are aligned.  The problem therefore reduces to
\begin{equation}\label{eq:matrix-prox-separable}
    \sum_{i=1}^q
    \left[\frac12(x_i-s_i)^2+\beta\phi_{a,\lambda}(x_i)\right].
\end{equation}
Each term has the unique minimizer \(u_\beta(s_i)\).  Since \(u_\beta\) is increasing, the output remains ordered and is a valid singular-value vector.  Finally, Theorem \ref{thm:matrix-weak-convexity} implies that the full matrix objective is \((1-\beta\rho_a)\)-strongly convex, hence the matrix minimizer is unique even when \(Y\) has repeated singular values.
\end{proof}

\begin{corollary}[Designed unit-scale shrinker]\label{cor:unit-matrix-prox}
At \(\beta=1\), condition \eqref{eq:scaled-prox-condition} holds automatically because \(\rho_a<1\), and
\begin{equation}\label{eq:unit-matrix-prox}
    \prox_{\Phi_{a,\lambda}}(Y)
    =U\diagop\bigl(f_{a,\lambda}(s_i)\bigr)V^T.
\end{equation}
\end{corollary}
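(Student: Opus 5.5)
The corollary follows by specializing Theorem~\ref{thm:scaled-matrix-prox} to \(\beta=1\) and identifying the resulting scalar map through Theorem~\ref{thm:unit-prox-scalar}. There are three steps.

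\textbf{Step 1: the uniqueness condition holds.} Theorem~\ref{thm:scalar-weak-convexity} gives \(\rho_a=1-L_a^{-1}\in(0,1)\), because Proposition~\ref{prop:La} gives \(1<L_a<\infty\). Hence at \(\beta=1\) we have \(\beta\rho_a=\rho_a<1\), so \eqref{eq:scaled-prox-condition} holds. Theorem~\ref{thm:scaled-matrix-prox} then applies. It says the matrix proximal subproblem has a unique global minimizer, given by \(U\diagop(u_1(s_i))V^T\), where \(u_1=\prox_{\phi_{a,\lambda}}\) is the scalar map of Theorem~\ref{thm:scaled-scalar-prox} at \(\beta=1\).

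\textbf{Step 2: identify \(u_1\) with \(f_{a,\lambda}\).} At \(\beta=1\), definition \eqref{eq:Tbeta} reduces to
\[
T_1(u)=f_{a,\lambda}^{-1}(u)+(1-1)u=f_{a,\lambda}^{-1}(u).
\]
Therefore \(u_1=T_1^{-1}=f_{a,\lambda}\), using the bijectivity from Lemma~\ref{lem:monotone}. Theorem~\ref{thm:unit-prox-scalar} gives the same conclusion directly: \(\prox_{\phi_{a,\lambda}}(s)=f_{a,\lambda}(s)\), with a unique minimizer over \(u\geq0\). Substituting this into \eqref{eq:scaled-matrix-prox} yields \eqref{eq:unit-matrix-prox}.

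\textbf{Step 3: the two scalar problems coincide.} Theorem~\ref{thm:unit-prox-scalar} minimizes over \(u\geq0\), and the matrix reduction \eqref{eq:matrix-prox-separable} also ranges over \(x_i\geq0\), since these are singular values. So the scalar subproblems are the same, and the formula does not depend on which SVD of \(Y\) is chosen, because the minimizer is unique.

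None of these steps is a real obstacle. The only point needing care is confirming that the \(\beta=1\) specialization of \(T_\beta\) is exactly \(f_{a,\lambda}^{-1}\), so that the two scalar characterizations agree.
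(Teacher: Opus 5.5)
Your proposal is correct and takes the same route the paper intends. The paper gives no separate proof: the corollary is Theorem~\ref{thm:scaled-matrix-prox} specialized to \(\beta=1\). There \(\rho_a<1\) gives the uniqueness condition, and \(T_1=f_{a,\lambda}^{-1}\) (equivalently, Theorem~\ref{thm:unit-prox-scalar}) identifies the scalar map as \(f_{a,\lambda}\).
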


The matrix analysis has now supplied the computational primitive needed for recovery.  Given an arbitrary input matrix, its singular vectors are preserved and each singular value is updated by a unique monotone scalar solve, provided \(\beta\rho_a<1\).  We next embed this primitive in a data-consistency iteration for the general observation model \eqref{eq:observation-model}.

\section{Recovery under General Linear Measurements}

\subsection{Smooth data fidelity}

The gradient of the data term in \eqref{eq:main-objective} is
\begin{equation}\label{eq:data-gradient}
    \nabla g(X)=\cA^*(\cA(X)-y).
\end{equation}
Indeed, for any direction \(H\),
\[
    \left.\frac{d}{dt}g(X+tH)\right|_{t=0}
    =\inner{\cA(X)-y}{\cA(H)}
    =\inner{\cA^*(\cA(X)-y)}{H}_F.
\]
Moreover,
\begin{equation}\label{eq:gradient-lipschitz}
    \norm{\nabla g(X)-\nabla g(Z)}_F
    \leq L_{\cA}\norm{X-Z}_F,
\end{equation}
so
\begin{equation}\label{eq:descent-lemma}
    g(Z)\leq g(X)+\inner{\nabla g(X)}{Z-X}_F
    +\frac{L_{\cA}}2\norm{Z-X}_F^2.
\end{equation}

The preceding upper bounds are used for optimization.  Reconstruction guarantees use the lower bound in Assumption \ref{ass:restricted-stability}.

\subsection{Identifiability and restricted stability}

Whenever \(\widehat X-X_0\in\cE\), Assumption \ref{ass:restricted-stability} gives
\begin{equation}\label{eq:prediction-to-image}
    \norm{\widehat X-X_0}_F^2
    \leq\frac{1}{\kappa p}
    \norm{\cA(\widehat X-X_0)}_2^2.
\end{equation}
As discussed in Section~2, this condition is inferential rather than algorithmic: it converts measurement-domain control into image-domain control, but is not used in the descent analysis below.

\subsection{Proximal-gradient iteration}

At the current iterate \(X^k\), minimize the quadratic local model
\begin{equation}\label{eq:local-model}
\begin{aligned}
    Q_\eta(X;X^k)
    ={}&\inner{\nabla g(X^k)}{X-X^k}_F
       +\frac{1}{2\eta}\norm{X-X^k}_F^2
       +\alpha\Phi_{a,\lambda}(X).
\end{aligned}
\end{equation}
Thus an update is chosen from
\[
    X^{k+1}\in\argmin_X Q_\eta(X;X^k).
\]
We now derive this minimizer explicitly.  Set
\[
    G^k:=\nabla g(X^k)
    =\cA^*\bigl(\cA(X^k)-y\bigr),
    \qquad
    Y^k=X^k-\eta G^k.
\]
For an arbitrary trial matrix \(X\), the two terms in
\eqref{eq:local-model} that arise from the smooth part satisfy
\begin{equation*}
\begin{aligned}
    &\inner{G^k}{X-X^k}_F
      +\frac{1}{2\eta}\norm{X-X^k}_F^2                                      \\
    &\quad=\frac{1}{2\eta}\left[
        \norm{X-X^k}_F^2
        +2\eta\inner{G^k}{X-X^k}_F
      \right]                                                                  \\
    &\quad=\frac{1}{2\eta}\left[
        \norm{X-X^k}_F^2
        +2\eta\inner{G^k}{X-X^k}_F
        +\eta^2\norm{G^k}_F^2
      \right]
      -\frac{\eta}{2}\norm{G^k}_F^2                                      \\
    &\quad=\frac{1}{2\eta}
      \norm{X-X^k+\eta G^k}_F^2
      -\frac{\eta}{2}\norm{G^k}_F^2                                     \\
    &\quad=\frac{1}{2\eta}\norm{X-Y^k}_F^2
      -\frac{\eta}{2}\norm{G^k}_F^2.
\end{aligned}
\end{equation*}
The third equality uses the Frobenius identity
\(
\norm{A+B}_F^2=\norm{A}_F^2+2\inner{A}{B}_F+\norm{B}_F^2
\), with \(A=X-X^k\) and \(B=\eta G^k\).  Consequently,
\begin{equation*}
\begin{aligned}
    Q_\eta(X;X^k)
    =\frac{1}{2\eta}\norm{X-Y^k}_F^2
     +\alpha\Phi_{a,\lambda}(X)
     -\frac{\eta}{2}\norm{G^k}_F^2.
\end{aligned}
\end{equation*}
The final term depends on \(X^k\) but not on the optimization variable
\(X\), and therefore it does not affect the set of minimizers.  We obtain
\begin{equation*}
\begin{aligned}
    X^{k+1}
    &\in\argmin_X\left\{
       \frac{1}{2\eta}\norm{X-Y^k}_F^2
       +\alpha\Phi_{a,\lambda}(X)
      \right\}                                                                 \\
    &=\argmin_X\left\{
       \frac12\norm{X-Y^k}_F^2
       +\eta\alpha\Phi_{a,\lambda}(X)
      \right\}.
\end{aligned}
\end{equation*}
The second line follows by multiplying the whole objective by the positive
number \(\eta\), which preserves its minimizers.  By definition, the final
argmin is the possibly set-valued proximal mapping
\(
\prox_{\eta\alpha\Phi_{a,\lambda}}(Y^k)
\).  Under \((\eta\alpha)\rho_a<1\), Theorem
\ref{thm:scaled-matrix-prox} shows that this set contains exactly one matrix.
Substituting the formula for \(G^k\) therefore gives
\begin{equation}\label{eq:pg-update}
    X^{k+1}
    =\prox_{\eta\alpha\Phi_{a,\lambda}}
    \left(X^k-\eta\cA^*(\cA(X^k)-y)\right).
\end{equation}
More explicitly, if
\[
    Y^k
    =U^k\diagop(s_1^k,\ldots,s_q^k)(V^k)^T,
\]
then the unique solution of \eqref{eq:local-model} is
\begin{equation*}
    X^{k+1}
    =U^k\diagop\left(
       T_{\eta\alpha}^{-1}(s_1^k),\ldots,
       T_{\eta\alpha}^{-1}(s_q^k)
      \right)(V^k)^T.
\end{equation*}
Thus the proximal scale appearing in the update is
\begin{equation}\label{eq:beta-eta-alpha}
    \beta=\eta\alpha.
\end{equation}
Accordingly, the singular values are updated with the scaled map \(u_{\eta\alpha}=T_{\eta\alpha}^{-1}\) from Theorem \ref{thm:scaled-scalar-prox}.

\begin{algorithm}[t]
\caption{Smooth-to-Hard Spectral Proximal Gradient}
\label{alg:spectral-pg}
\begin{algorithmic}[1]
\REQUIRE \(y,\cA,\cA^*,a,\lambda,\alpha,\eta,X^0\), outer tolerance \(\epsilon_{\mathrm{out}}\), root tolerance \(\epsilon_{\mathrm{root}}\).
\STATE Set \(\rho_a=1-L_a^{-1}\) and verify \((\eta\alpha)\rho_a<1\).
\FOR{\(k=0,1,2,\ldots\)}
    \STATE \(R^k=\cA(X^k)-y\).
    \STATE \(Y^k=X^k-\eta\cA^*R^k\).
    \STATE Compute a thin SVD \(Y^k=U^k\diagop(s_1^k,\ldots,s_q^k)(V^k)^T\).
    \STATE For each \(i\), solve \(T_{\eta\alpha}(u_i^{k+1})=s_i^k\) on \([0,s_i^k]\).
    \STATE \(X^{k+1}=U^k\diagop(u_1^{k+1},\ldots,u_q^{k+1})(V^k)^T\).
    \IF{\(\norm{X^{k+1}-X^k}_F/\max\{1,\norm{X^k}_F\}\leq\epsilon_{\mathrm{out}}\)}
        \STATE stop.
    \ENDIF
\ENDFOR
\RETURN \(X^{k+1}\).
\end{algorithmic}
\end{algorithm}

\subsection{Step size and implementation}

We impose
\begin{equation}\label{eq:step-condition}
    0<\eta<\min\left\{\frac1{L_{\cA}},\frac1{\alpha\rho_a}\right\}.
\end{equation}
The first restriction makes the coefficient in \eqref{eq:sufficient-descent} strictly positive.  The second is exactly the scaled-prox uniqueness condition \((\eta\alpha)\rho_a<1\), which gives the well-posedness stated in Proposition \ref{prop:unique-update}.
When \(L_{\cA}\) is unavailable analytically, it can be estimated by a power iteration applied to \(\cA^*\cA\), or the first condition can be enforced by backtracking.

The dominant per-iteration cost is typically the SVD and the pair of operator calls \(\cA,\cA^*\).  Bisection of all singular-value roots costs
\(
O(q\log(1/\epsilon_{\mathrm{root}}))
\)
function evaluations and is normally secondary.  Truncated or randomized SVD may be useful in large problems, but the exact convergence theorem below applies to the exact proximal update; inexact variants require an explicit error analysis.

The construction of the algorithm is now complete: the gradient step enforces consistency with the measurements, and the spectral proximal step imposes the smooth-to-hard low-rank prior.  The two bounds in \eqref{eq:step-condition} ensure that these components are compatible.  We next show that the resulting iteration decreases the original objective and approaches first-order stationarity.

\section{Descent and Stationarity of the Recovery Iteration}

Fix \(a,\lambda,\alpha\) and a step size satisfying \eqref{eq:step-condition}, and suppose that each spectral proximal subproblem is solved exactly.  Since the regularizer is nonconvex, the appropriate first-order object is the limiting subdifferential \(\partial\), whose calculus for singular-value functions is described in \cite{lewissendov2005}.  A stationary point \(X^\star\) satisfies
\begin{equation}\label{eq:stationarity}
    0\in\nabla g(X^\star)+\alpha\partial\Phi_{a,\lambda}(X^\star).
\end{equation}

\begin{proposition}[Unique update]\label{prop:unique-update}
If \(\eta\alpha\rho_a<1\), the local model \eqref{eq:local-model} is
\begin{equation}\label{eq:local-strong-convexity}
    \left(\frac1\eta-\alpha\rho_a\right)\text{-strongly convex}.
\end{equation}
Consequently, \(X^{k+1}\) is uniquely defined.
\end{proposition}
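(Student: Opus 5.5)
The plan is to decompose the local model into a strongly convex quadratic plus a weakly convex penalty and then add the moduli. Write
\[
Q_\eta(X;X^k)=\Bigl[\inner{\nabla g(X^k)}{X-X^k}_F+\frac{1}{2\eta}\norm{X-X^k}_F^2-\frac{\alpha\rho_a}{2}\norm{X}_F^2\Bigr]+\alpha\Bigl[\Phi_{a,\lambda}(X)+\frac{\rho_a}{2}\norm{X}_F^2\Bigr].
\]
By Theorem~\ref{thm:matrix-weak-convexity}, the second bracket is convex on \(\R^{M\times N}\). It is multiplied by \(\alpha>0\), so it stays convex.

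For the first bracket, the linear term contributes nothing to curvature. The quadratic part is \(\frac{1}{2\eta}\norm{X}_F^2-\frac{\alpha\rho_a}{2}\norm{X}_F^2\) plus affine terms in \(X\). Hence the first bracket equals \(\frac12(\eta^{-1}-\alpha\rho_a)\norm{X}_F^2\) plus an affine function. Under \(\eta\alpha\rho_a<1\), the coefficient \(\mu:=\eta^{-1}-\alpha\rho_a\) is strictly positive, so this bracket is \(\mu\)-strongly convex.

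A \(\mu\)-strongly convex function plus a convex function is \(\mu\)-strongly convex. The cleanest way to see this is that \(Q_\eta-\frac{\mu}{2}\norm{X}_F^2\) is a sum of an affine function and a convex function, hence convex. This gives \eqref{eq:local-strong-convexity}.

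For uniqueness, I need existence plus strict convexity. Existence follows because \(Q_\eta\) is continuous and strong convexity makes it coercive: it is bounded below by a quadratic growing like \(\frac{\mu}{2}\norm{X}_F^2\) minus a linear term. Continuity of \(\Phi_{a,\lambda}\) holds because \(\phi_{a,\lambda}\) is continuous and singular values depend continuously on \(X\). Strong convexity then rules out two distinct minimizers. As a consistency check, the earlier derivation identifies the minimizer with \(\prox_{\eta\alpha\Phi_{a,\lambda}}(Y^k)\), and Theorem~\ref{thm:scaled-matrix-prox} with \(\beta=\eta\alpha\) already gives the same uniqueness.

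The only delicate step is making sure the weak-convexity statement applies on the whole matrix space and not just to singular-value vectors. That is exactly the content of Theorem~\ref{thm:matrix-weak-convexity}, which uses the monotone convex function \(r_{a,\lambda}\) and Lewis's convexity result. I therefore expect no real obstacle beyond bookkeeping the factor \(\alpha\) correctly in the scale.
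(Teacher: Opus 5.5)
Your proof is correct and takes essentially the same route as the paper: the $\rho_a$-weak convexity of $\Phi_{a,\lambda}$ from Theorem~\ref{thm:matrix-weak-convexity}, scaled by $\alpha$, is absorbed by the curvature $1/\eta$ of the proximal quadratic, which leaves modulus $1/\eta-\alpha\rho_a>0$. Your explicit existence argument (continuity of $\Phi_{a,\lambda}$ plus coercivity) fills in a step that the paper leaves implicit in passing from strong convexity to a uniquely defined $X^{k+1}$.
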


\begin{proof}
The spectral function \(\Phi_{a,\lambda}\) is \(\rho_a\)-weakly convex by Theorem \ref{thm:matrix-weak-convexity}.  The quadratic term in \eqref{eq:local-model} contributes curvature \(1/\eta\), while its linear term contributes none.  The remaining curvature is \(1/\eta-\alpha\rho_a>0\).
\end{proof}

\begin{theorem}[Descent and stationarity]\label{thm:sufficient-descent}\label{thm:cluster-stationary}\label{thm:complexity}
For every \(k\),
\begin{equation}\label{eq:sufficient-descent}
    F_{a,\lambda}(X^{k+1})
    \leq F_{a,\lambda}(X^k)
    -\frac{1-\eta L_{\cA}}{2\eta}
    \norm{X^{k+1}-X^k}_F^2.
\end{equation}
Moreover,
\begin{equation}\label{eq:square-summability}
    \sum_{k=0}^{\infty}\norm{X^{k+1}-X^k}_F^2<\infty,
    \qquad
    \norm{X^{k+1}-X^k}_F\to0.
\end{equation}
Consequently,
\begin{equation}\label{eq:stationarity-residual}
    \distop\bigl(0,\partial F_{a,\lambda}(X^{k+1})\bigr)
    \leq\left(\frac1\eta+L_{\cA}\right)
    \norm{X^{k+1}-X^k}_F\longrightarrow0.
\end{equation}
Every cluster point of \(\{X^k\}\) satisfies \eqref{eq:stationarity}.  If the initial sublevel set
\begin{equation}\label{eq:initial-sublevel}
    \{X\st F_{a,\lambda}(X)\leq F_{a,\lambda}(X^0)\}
\end{equation}
is bounded, then the sequence has at least one cluster point and every cluster point is stationary.
Finally, with \(F_{\inf}=\inf_XF_{a,\lambda}(X)\), every integer \(K\geq1\) satisfies
\begin{equation}\label{eq:complexity}
\begin{aligned}
    \min_{0\leq k<K}
    \distop\bigl(0,\partial F_{a,\lambda}(X^{k+1})\bigr)^2
    \leq{}&
    \left(\frac1\eta+L_{\cA}\right)^2
    \frac{2\eta\bigl(F_{a,\lambda}(X^0)-F_{\inf}\bigr)}
         {(1-\eta L_{\cA})K}.
\end{aligned}
\end{equation}
\end{theorem}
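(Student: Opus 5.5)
The plan is the standard nonconvex proximal-gradient argument, specialized to the present objective. I use Proposition~\ref{prop:unique-update} for well-posedness and the descent lemma \eqref{eq:descent-lemma} for the smooth part.

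\textbf{Sufficient descent.} Compare the exact minimizer \(X^{k+1}\) of the local model \eqref{eq:local-model} with the trial point \(X=X^k\). Optimality gives
\[
\inner{\nabla g(X^k)}{X^{k+1}-X^k}_F+\tfrac1{2\eta}\norm{X^{k+1}-X^k}_F^2+\alpha\Phi_{a,\lambda}(X^{k+1})\le\alpha\Phi_{a,\lambda}(X^k).
\]
Adding the descent lemma with \(Z=X^{k+1}\) and \(X=X^k\) yields
\[
F(X^{k+1})\le F(X^k)-\Bigl(\tfrac1{2\eta}-\tfrac{L_{\cA}}2\Bigr)\norm{X^{k+1}-X^k}_F^2,
\]
which is \eqref{eq:sufficient-descent}. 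Strong convexity of the model could sharpen the constant, but it is not needed. Since \(\Phi_{a,\lambda}\ge0\) and \(g\ge0\), \(F\) is bounded below by \(F_{\inf}\ge0\). Telescoping over \(k=0,\dots,K-1\) gives
\[
\tfrac{1-\eta L_{\cA}}{2\eta}\sum_{k<K}\norm{X^{k+1}-X^k}_F^2\le F(X^0)-F_{\inf}.
\]
Letting \(K\to\infty\) gives \eqref{eq:square-summability}.

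\textbf{Residual bound.} Write the first-order condition of the prox subproblem using the limiting subdifferential. Because \(\Phi_{a,\lambda}\) is \(\rho_a\)-weakly convex (Theorem~\ref{thm:matrix-weak-convexity}), \(\partial\Phi_{a,\lambda}\) coincides with the subdifferential of the convex function \(\Phi+\tfrac{\rho_a}{2}\norm{\cdot}_F^2\) shifted by \(-\rho_aX\). Fermat's rule for the strongly convex model therefore holds with the usual sum rule, because the other terms are smooth:
\[
0\in\nabla g(X^k)+\tfrac1\eta(X^{k+1}-X^k)+\alpha\partial\Phi_{a,\lambda}(X^{k+1}).
\]
Since \(\partial F=\nabla g+\alpha\partial\Phi\), again by the smooth sum rule, the element
\[
\nabla g(X^{k+1})-\nabla g(X^k)-\tfrac1\eta(X^{k+1}-X^k)
\]
lies in \(\partial F(X^{k+1})\). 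Its norm is at most \((1/\eta+L_{\cA})\norm{X^{k+1}-X^k}_F\) by \eqref{eq:gradient-lipschitz}, which proves \eqref{eq:stationarity-residual}. Squaring, taking the minimum over \(k<K\), and dividing the telescoped sum by \(K\) gives \eqref{eq:complexity}.

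\textbf{Cluster points.} This is the main obstacle, because it needs closedness of the limiting subdifferential along the subsequence, which requires \(\Phi(X^{k_j+1})\to\Phi(X^\star)\). Here this is easy, since \(\Phi_{a,\lambda}\) is continuous: \(\phi_{a,\lambda}\) is continuous and singular values depend continuously on \(X\). The argument runs as follows.
\begin{enumerate}[label=\textup{(\roman*)}]
    \item Suppose \(X^{k_j}\to X^\star\). Then \(X^{k_j+1}\to X^\star\), because the increments vanish.
    \item The vectors \(V_j\in\partial F(X^{k_j+1})\) constructed above tend to \(0\), and \(F(X^{k_j+1})\to F(X^\star)\) by continuity.
    \item Outer semicontinuity of the limiting subdifferential under \(F\)-attentive convergence then gives \(0\in\partial F(X^\star)\), which is \eqref{eq:stationarity}.
\end{enumerate}
Alternatively, one can pass to the limit directly in the convexified inclusion for \(\Phi+\tfrac{\rho_a}2\norm{\cdot}_F^2\), using closedness of the convex subdifferential graph.

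For the last claim, monotone descent keeps every iterate in the sublevel set \eqref{eq:initial-sublevel}. If that set is bounded, Bolzano--Weierstrass provides a cluster point, and it is stationary by the argument above.
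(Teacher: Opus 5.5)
Your proposal is correct and follows the paper's proof step for step. You compare the local model at \(X^{k+1}\) and \(X^k\), add the descent lemma, telescope, shift the prox optimality condition by \(\nabla g(X^{k+1})-\nabla g(X^k)\), and pass to the limit using continuity of \(F_{a,\lambda}\) and closedness of the limiting-subdifferential graph; you also usefully make explicit two details the paper leaves implicit, namely that \(F_{\inf}\geq0\) (so the telescoped sum is finite) and that weak convexity of \(\Phi_{a,\lambda}\) justifies the smooth sum rule.
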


\begin{proof}
Let \(D^k=X^{k+1}-X^k\).  Comparison of the local model at \(X^{k+1}\) and \(X^k\), followed by the descent lemma for \(g\), gives \eqref{eq:sufficient-descent}.  Summation proves \eqref{eq:square-summability}.  The update optimality condition supplies \(v^{k+1}\in\partial\Phi_{a,\lambda}(X^{k+1})\) such that
\[
  0=\nabla g(X^k)+\eta^{-1}D^k+\alpha v^{k+1}.
\]
Hence
\[
  \eta^{-1}(-D^k)+\nabla g(X^{k+1})-\nabla g(X^k)
  \in\partial F_{a,\lambda}(X^{k+1}),
\]
and Lipschitz continuity of \(\nabla g\) yields \eqref{eq:stationarity-residual}.

For a convergent subsequence, \eqref{eq:square-summability}, continuity of \(F_{a,\lambda}\), and closedness of the limiting-subdifferential graph give stationarity of its limit.  Boundedness of \eqref{eq:initial-sublevel} guarantees existence of such a subsequence.

Finally, telescope \eqref{eq:sufficient-descent} and use
\[
  F_{a,\lambda}(X^K)\geq F_{\inf}.
\]
Taking the smallest squared step and applying \eqref{eq:stationarity-residual} gives \eqref{eq:complexity}.
\end{proof}

The boundedness condition is not automatic because the penalty saturates and an undersampled \(\cA\) may have null directions.  Thus the theorem guarantees stationarity of cluster points, not convergence of the entire sequence.  It also gives \(O(\epsilon^{-1})\) iterations for a squared stationarity residual at most \(\epsilon\).

The optimization theory applies to every fixed pair \((a,\lambda)\), but it does not determine which pair should be used for a given data set.  The final part of the analysis separates this statistical choice from the convergence argument by evaluating training-based reconstructions on independent validation measurements.

\section{Data-Driven Selection by Independent Validation}

\subsection{Training--validation split}

Split the measurement coordinates into training and validation parts,
\begin{align}
    y_{\mathrm{tr}}&=\cA_{\mathrm{tr}}(X_0)+\varepsilon_{\mathrm{tr}},\label{eq:train-model}\\
    y_{\mathrm{val}}&=\cA_{\mathrm{val}}(X_0)+\varepsilon_{\mathrm{val}},
    \qquad
    \varepsilon_{\mathrm{val}}\sim N(0,\tau^2I_m),
    \label{eq:val-model}
\end{align}
with independent noise vectors.  Let
\(
\Theta_K=\{\theta_1,\ldots,\theta_K\}
\)
be a finite candidate set fixed before observing validation noise.  Usually \(\theta=(a,\lambda)\); \(\alpha\) may also be included if it is not fixed.  For each candidate, compute
\(
\widehat X_\theta=\widehat X_\theta(y_{\mathrm{tr}})
\)
using only the training data.

Define the validation prediction risk
\begin{equation}\label{eq:validation-risk}
    R_{\mathrm{val}}(\theta)
    =\frac1m\norm{\cA_{\mathrm{val}}(\widehat X_\theta-X_0)}_2^2
\end{equation}
and the debiased validation score
\begin{equation}\label{eq:validation-score}
    \widehat R_{\mathrm{val}}(\theta)
    =\frac1m\norm{\cA_{\mathrm{val}}(\widehat X_\theta)-y_{\mathrm{val}}}_2^2
    -\tau^2.
\end{equation}

\begin{proposition}[Conditional unbiasedness for a fixed candidate]\label{prop:validation-unbiased}
For every fixed \(\theta\in\Theta_K\),
\begin{equation}\label{eq:validation-unbiased}
    \E\left[
      \widehat R_{\mathrm{val}}(\theta)
      \mid y_{\mathrm{tr}},\cA_{\mathrm{val}}
    \right]
    =R_{\mathrm{val}}(\theta).
\end{equation}
\end{proposition}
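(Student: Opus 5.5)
The approach is to expand the validation residual about the unknown truth and use independence of the validation noise from everything held fixed by the conditioning. Write
\(
\cA_{\mathrm{val}}(\widehat X_\theta)-y_{\mathrm{val}}
=\cA_{\mathrm{val}}(\widehat X_\theta-X_0)-\varepsilon_{\mathrm{val}}
\)
using \eqref{eq:val-model}, and set \(d_\theta=\cA_{\mathrm{val}}(\widehat X_\theta-X_0)\in\R^m\). Expanding the squared norm gives
\[
    \norm{\cA_{\mathrm{val}}(\widehat X_\theta)-y_{\mathrm{val}}}_2^2
    =\norm{d_\theta}_2^2-2\inner{d_\theta}{\varepsilon_{\mathrm{val}}}+\norm{\varepsilon_{\mathrm{val}}}_2^2 .
\]

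The key observation is that \(d_\theta\) is measurable with respect to \((y_{\mathrm{tr}},\cA_{\mathrm{val}})\). The candidate \(\theta\) is fixed in advance, \(\widehat X_\theta=\widehat X_\theta(y_{\mathrm{tr}})\) depends only on training data, and \(X_0\) is deterministic. Moreover, \(\varepsilon_{\mathrm{val}}\) is independent of \(\varepsilon_{\mathrm{tr}}\), hence of \(y_{\mathrm{tr}}\). I would also make explicit the implicit assumption that the validation noise is independent of the (possibly random) design \(\cA_{\mathrm{val}}\); if the design is deterministic this is automatic.

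Under this conditioning the cross term has conditional mean
\(
-2\inner{d_\theta}{\E[\varepsilon_{\mathrm{val}}]}=0,
\)
and
\(
\E\norm{\varepsilon_{\mathrm{val}}}_2^2=\operatorname{tr}(\tau^2I_m)=m\tau^2.
\)
Dividing by \(m\) and subtracting \(\tau^2\) as in \eqref{eq:validation-score} leaves \(\frac1m\norm{d_\theta}_2^2=R_{\mathrm{val}}(\theta)\), which is \eqref{eq:validation-unbiased}.

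The only delicate point is this conditioning and independence step. Pulling \(d_\theta\) out of the conditional expectation requires that it be a function of the conditioning variables alone, and evaluating the noise moments at their unconditional values requires that \(\varepsilon_{\mathrm{val}}\) be independent of \((y_{\mathrm{tr}},\cA_{\mathrm{val}})\). This is also why the statement is restricted to a fixed candidate. For the data-selected \(\widehat\theta\), the score depends on \(\varepsilon_{\mathrm{val}}\), the cross term no longer has mean zero, and that case is left to the oracle inequality.
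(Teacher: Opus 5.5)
Your proposal is correct and follows the paper's proof essentially line for line: condition on \((y_{\mathrm{tr}},\cA_{\mathrm{val}})\), write the residual as \(d_\theta-\varepsilon_{\mathrm{val}}\), and use that the cross term has mean zero and \(\E\norm{\varepsilon_{\mathrm{val}}}_2^2=m\tau^2\). Your explicit requirement that \(\varepsilon_{\mathrm{val}}\) be independent of a possibly random \(\cA_{\mathrm{val}}\) states an assumption that the paper leaves implicit.
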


\begin{proof}
Condition on \((y_{\mathrm{tr}},\cA_{\mathrm{val}})\) and set
\(
d_\theta=\cA_{\mathrm{val}}(\widehat X_\theta-X_0)
\).
Then \(d_\theta\) is fixed and
\(
\cA_{\mathrm{val}}(\widehat X_\theta)-y_{\mathrm{val}}
=d_\theta-\varepsilon_{\mathrm{val}}
\).  Since the validation noise is independent, centered, and has
\(\E\norm{\varepsilon_{\mathrm{val}}}_2^2=m\tau^2\),
\begin{align*}
    \E\left[
      \widehat R_{\mathrm{val}}(\theta)
      \mid y_{\mathrm{tr}},\cA_{\mathrm{val}}
    \right]
    &=\frac1m\norm{d_\theta}_2^2
      -\frac2m\inner{d_\theta}{\E[\varepsilon_{\mathrm{val}}]}
      +\frac1m\E\norm{\varepsilon_{\mathrm{val}}}_2^2-\tau^2\\
    &=R_{\mathrm{val}}(\theta),
\end{align*}
which proves \eqref{eq:validation-unbiased}.
\end{proof}

Because Proposition \ref{prop:validation-unbiased} is candidate-wise, selection requires a deviation bound that holds uniformly over the finite grid.

\subsection{Finite-grid oracle inequality}

Select
\begin{equation}\label{eq:selected-theta}
    \widehat\theta
    \in\argmin_{\theta\in\Theta_K}
    \norm{\cA_{\mathrm{val}}(\widehat X_\theta)-y_{\mathrm{val}}}_2^2.
\end{equation}
Subtracting the common constant \(m\tau^2\) does not change the minimizer.

\begin{assumption}[Uniform candidate scale]\label{ass:candidate-bound}
Conditionally on the training data and validation operator, there is a finite \(B\) such that
\begin{equation}\label{eq:candidate-bound}
    \frac1{\sqrt m}\norm{d_\theta}_2\leq B,
    \qquad \theta\in\Theta_K.
\end{equation}
\end{assumption}

For a finite candidate set, some conditional \(B\) always exists.  A useful uniform scale follows, for example, from
\[
    \frac1{\sqrt m}\norm{d_\theta}_2
    \leq
    \frac{\norm{\cA_{\mathrm{val}}}_{\mathrm{op}}}{\sqrt m}
    \norm{\widehat X_\theta-X_0}_F.
\]
Unlike restricted stability, this is an upper-scale condition used only for concentration.

\begin{theorem}[Finite-candidate oracle inequality]\label{thm:oracle}
Under Assumption \ref{ass:candidate-bound}, for every \(0<\delta<1\), with conditional validation-noise probability at least \(1-\delta\),
\begin{equation}\label{eq:oracle}
    R_{\mathrm{val}}(\widehat\theta)
    \leq
    \min_{\theta\in\Theta_K}R_{\mathrm{val}}(\theta)
    +4\sqrt2\,\tau B
      \sqrt{\frac{\log(2K/\delta)}{m}}.
\end{equation}
\end{theorem}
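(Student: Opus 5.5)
Condition throughout on \((y_{\mathrm{tr}},\cA_{\mathrm{val}})\), so that the candidates \(\widehat X_\theta\) and the vectors \(d_\theta=\cA_{\mathrm{val}}(\widehat X_\theta-X_0)\) are deterministic.  The only randomness left is \(\varepsilon_{\mathrm{val}}\sim N(0,\tau^2I_m)\).  As in the proof of Proposition \ref{prop:validation-unbiased}, we have \(\cA_{\mathrm{val}}(\widehat X_\theta)-y_{\mathrm{val}}=d_\theta-\varepsilon_{\mathrm{val}}\), which gives the exact decomposition
\[
    \frac1m\norm{\cA_{\mathrm{val}}(\widehat X_\theta)-y_{\mathrm{val}}}_2^2
    =R_{\mathrm{val}}(\theta)-\frac2m\inner{d_\theta}{\varepsilon_{\mathrm{val}}}+\frac1m\norm{\varepsilon_{\mathrm{val}}}_2^2 .
\]
The noise energy term \(\frac1m\norm{\varepsilon_{\mathrm{val}}}_2^2\) does not depend on \(\theta\), so it cancels when two candidates are compared.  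Let \(\theta^\star\) minimize \(R_{\mathrm{val}}\) over \(\Theta_K\).  The defining inequality \eqref{eq:selected-theta} then yields the basic inequality
\[
    R_{\mathrm{val}}(\widehat\theta)\leq R_{\mathrm{val}}(\theta^\star)
    +\frac2m\inner{d_{\widehat\theta}}{\varepsilon_{\mathrm{val}}}
    -\frac2m\inner{d_{\theta^\star}}{\varepsilon_{\mathrm{val}}}
    \leq R_{\mathrm{val}}(\theta^\star)+\frac4m\max_{\theta\in\Theta_K}\abs{\inner{d_\theta}{\varepsilon_{\mathrm{val}}}}.
\]
The deterministic part of the argument ends here.

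For the stochastic part, each \(Z_\theta=\inner{d_\theta}{\varepsilon_{\mathrm{val}}}\) is conditionally \(N(0,\tau^2\norm{d_\theta}_2^2)\).  By Assumption \ref{ass:candidate-bound}, its standard deviation is at most \(\tau B\sqrt m\).  The Gaussian tail bound \(\Pr(\abs{Z}>t)\leq2e^{-t^2/(2\sigma^2)}\) applies to each candidate, and a union bound over the \(K\) candidates gives
\[
    \Pr\Bigl(\max_\theta\abs{Z_\theta}>\tau B\sqrt{2m\log(2K/\delta)}\Bigr)\leq\delta .
\]
Substituting this into the basic inequality bounds the excess term by \(\frac4m\tau B\sqrt{2m\log(2K/\delta)}=4\sqrt2\,\tau B\sqrt{\log(2K/\delta)/m}\), which is exactly \eqref{eq:oracle}.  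The bound holds with conditional probability at least \(1-\delta\), and therefore also unconditionally.

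I do not expect a serious obstacle.  The one point that needs care is to justify the conditioning properly.  Because \(\Theta_K\) is fixed in advance, each \(\widehat X_\theta\) depends only on \(y_{\mathrm{tr}}\), which is independent of \(\varepsilon_{\mathrm{val}}\).  As a result the \(d_\theta\) are fixed under the conditional law, and \(\varepsilon_{\mathrm{val}}\) keeps its \(N(0,\tau^2I_m)\) distribution.  Without this independence the Gaussian tail bound could not be applied to the data-dependent index \(\widehat\theta\).  The union bound handles that index by controlling all \(K\) candidates simultaneously.  One could sharpen the constant by bounding the two inner products separately, but the crude bound through the maximum already gives the constant stated.
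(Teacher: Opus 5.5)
Your proof is correct and follows the paper's argument essentially step for step. You expand \(m^{-1}\norm{d_\theta-\varepsilon_{\mathrm{val}}}_2^2\), cancel the common noise-energy term, control all \(K\) conditionally Gaussian cross terms at once by a tail bound plus a union bound (with the variance bound from Assumption~\ref{ass:candidate-bound}), and compare \(\widehat\theta\) with an oracle \(\theta^\star\) to pick up twice the uniform deviation. The only difference is bookkeeping: you bound the excess by \(\frac4m\max_\theta\abs{Z_\theta}\), whereas the paper bounds \(\max_\theta\abs{\frac2m\inner{d_\theta}{\varepsilon_{\mathrm{val}}}}\) by \(t_\delta\) and adds \(2t_\delta\), and both give the constant \(4\sqrt2\).
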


\begin{proof}
Expanding the validation loss gives
\[
  m^{-1}\norm{d_\theta-\varepsilon_{\mathrm{val}}}_2^2
  =R_{\mathrm{val}}(\theta)
  -2m^{-1}\inner{d_\theta}{\varepsilon_{\mathrm{val}}}
  +m^{-1}\norm{\varepsilon_{\mathrm{val}}}_2^2.
\]
The last term is common to all candidates, while the middle term is conditionally centered Gaussian with variance at most \(4\tau^2B^2/m\).  The Gaussian tail bound and a union bound therefore give
\[
    \max_{\theta\in\Theta_K}\abs{\frac2m\inner{d_\theta}{\varepsilon_{\mathrm{val}}}}
    \leq2\sqrt2\,\tau B
      \sqrt{\frac{\log(2K/\delta)}{m}}
    =:t_\delta
\]
with probability at least \(1-\delta\).  Comparing the selected candidate with \(\theta^\star\in\argmin_{\theta\in\Theta_K}R_{\mathrm{val}}(\theta)\) on this event yields \(R_{\mathrm{val}}(\widehat\theta)\leq R_{\mathrm{val}}(\theta^\star)+2t_\delta\), which is \eqref{eq:oracle}.
\end{proof}

The selection cost is \(O(\sqrt{\log K/m})\).
The theorem compares with the best member of the pre-specified finite grid, not with an optimizer over a continuous parameter space.  It does not assert recovery of a ``true'' parameter pair.

The oracle inequality controls prediction error only in the validation measurement domain.  To convert this guarantee into Frobenius reconstruction error, we now invoke the validation-operator instance of the restricted stability condition.

\subsection{From validation prediction to reconstruction error}

Conditionally on the training data, the image-domain statement is applied
whenever the realized candidate errors satisfy
\[
    \{\widehat X_\theta-X_0:\theta\in\Theta_K\}
    \subseteq\cE_{\mathrm{val}},
\]
where \(\cE_{\mathrm{val}}\) is the prescribed class in Assumption
\ref{ass:restricted-stability}.  The validation stability bound then holds
simultaneously for every candidate error on that training realization.

\begin{corollary}[Image-domain bound]\label{cor:image-bound}
Under the assumptions of Theorem \ref{thm:oracle}, the validation clause
of Assumption \ref{ass:restricted-stability}, and the candidate-error
containment above, with probability at least \(1-\delta\),
\begin{equation}\label{eq:image-bound}
\begin{aligned}
    \norm{\widehat X_{\widehat\theta}-X_0}_F^2
    \leq\frac1{\kappa_{\mathrm{val}}}
    \left[
      \min_{\theta\in\Theta_K}R_{\mathrm{val}}(\theta)
      +4\sqrt2\,\tau B
       \sqrt{\frac{\log(2K/\delta)}{m}}
    \right].
\end{aligned}
\end{equation}
\end{corollary}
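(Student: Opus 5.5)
The argument chains the validation-operator instance of restricted stability with Theorem \ref{thm:oracle}, working throughout conditionally on the training data and the validation operator.

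\textbf{Step 1: Frobenius error is controlled by validation risk.} On a training realization, the candidate-error containment gives
\[
    \widehat X_\theta-X_0\in\cE_{\mathrm{val}}
    \qquad\text{for every }\theta\in\Theta_K .
\]
The lower bound \eqref{eq:validation-restricted-stability} then yields, simultaneously for all candidates,
\[
    \norm{\widehat X_\theta-X_0}_F^2\leq\kappa_{\mathrm{val}}^{-1}R_{\mathrm{val}}(\theta).
\]
This is the validation analogue of \eqref{eq:prediction-to-image}. Its key feature is that it is deterministic: it does not involve $\varepsilon_{\mathrm{val}}$ at all.

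\textbf{Step 2: apply the bound at the selected index.} The selected index $\widehat\theta$ is random, since it depends on the validation noise. However, it always lies in $\Theta_K$, and Step 1 holds for every element of that set. We may therefore evaluate the bound at $\widehat\theta$ and obtain $\norm{\widehat X_{\widehat\theta}-X_0}_F^2\leq\kappa_{\mathrm{val}}^{-1}R_{\mathrm{val}}(\widehat\theta)$ on every validation-noise realization.

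\textbf{Step 3: insert the oracle inequality.} Theorem \ref{thm:oracle} provides \eqref{eq:oracle} on an event of conditional probability at least $1-\delta$. On that event, substituting \eqref{eq:oracle} into Step 2 and multiplying through by the positive constant $\kappa_{\mathrm{val}}^{-1}$ gives exactly \eqref{eq:image-bound}.

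\textbf{Step 4: remove the conditioning (optional).} If an unconditional statement is wanted, the conditional probability bound holds on each training realization for which the containment and Assumption \ref{ass:candidate-bound} hold. Integrating over those realizations preserves the level $1-\delta$.

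The only point that needs care is Step 2. The containment must be a property of the whole finite family, fixed before the validation noise is drawn, rather than of $\widehat\theta$ alone. Otherwise the stability set would depend on the very noise used for selection. This is why the paper insists on the prescribed class $\cE_{\mathrm{val}}$ and the uniform containment, and once that is granted the rest is a direct chaining of inequalities.
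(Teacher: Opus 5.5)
Your proposal is correct and is the same argument as the paper's: apply the validation restricted-stability bound \eqref{eq:validation-restricted-stability} to $H=\widehat X_{\widehat\theta}-X_0$ (legitimate because the containment covers every candidate on the given training realization), then substitute the oracle inequality \eqref{eq:oracle} and multiply by $\kappa_{\mathrm{val}}^{-1}$. One small caveat on your optional Step~4: integrating only over training realizations where the containment holds yields level $1-\delta$ conditionally on that event, and it yields level $1-\delta$ unconditionally only if the containment holds almost surely.
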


\begin{proof}
Apply \eqref{eq:validation-restricted-stability} to
\(H=\widehat X_{\widehat\theta}-X_0\), and then substitute
\eqref{eq:oracle}.
\end{proof}

Proposition \ref{prop:validation-unbiased} concerns each candidate before minimization over the grid.  Consequently, the smallest observed validation score should not be reported as an unbiased estimate of the selected procedure's risk.  In practice one may select \(\widehat\theta\) from the split and then rerun the recovery algorithm on all measurements with this parameter fixed.  The oracle inequality above applies directly to the split-trained candidates, not automatically to the refitted estimator; transferring the guarantee through refitting would require an additional stability argument.  The same distinction must be maintained if cross-fitting is used to reduce dependence on a single split.

This completes the theoretical chain.  Independent validation selects among the stationary reconstructions produced by the optimization method, while restricted stability determines whether their validation-domain performance controls the underlying matrix error.  Neither ingredient substitutes for the other: convergence concerns computation, whereas restricted stability and validation concern identifiability and statistical selection.

\section{Numerical Evaluation}
\label{sec:numerical_evaluation}

\subsection{Objective and protocol overview}
\label{sec:evaluation_objective}

We apply the proposed recovery method to retrospectively undersampled cardiac cine MRI to evaluate its reconstruction performance and empirical robustness under different acceleration and noise conditions.

The baselines were Soft and Hard singular-value thresholding and low-rank plus sparse (L+S) reconstruction.  We compared them with the proposed smooth-to-hard regularizer over prespecified accelerations and measurement-noise levels.  The same formal OCMR series, sampling masks, noisy measurements, validation splits, and stopping rule were used for all four methods.  All retrospective degradation was applied after coil compression, frame-wise sensitivity estimation, and sensitivity-weighted coil combination; hence, the reconstruction problem was a single-image Cartesian inverse problem.

\subsection{Data and single-image preprocessing}
\label{sec:data_preprocessing}

Fully sampled multi-coil cardiac cine data were obtained from the OCMR data set \cite{chen2020ocmr}.  Four series were reserved for pipeline checks, candidate construction, and parameter-boundary auditing.  The disjoint formal cohort contained \(20\) series: \(10\) short-axis and \(10\) long-axis acquisitions, nine acquired at \(1.5\)~T and eleven at \(3\)~T, with \(16\)--\(27\) cardiac phases (median \(21.5\)).  Because public patient identifiers were unavailable, the series were treated as the statistical units.

Readout oversampling was removed and each series was brought to its reconstruction matrix.  Four formal series contained partial-Fourier readout; their missing samples were completed by eight fixed POCS iterations.  A global semi-unitary compression basis then reduced the original \(15\)--\(38\) receiver coils to \(12\) virtual coils, retaining \(97.68\%\)--\(99.29\%\) of the coil energy (median \(98.69\%\)).

ESPIRiT sensitivity maps were estimated independently for each cardiac frame from the fully sampled virtual-coil data, using the full readout extent, \(24\) central \(k_y\) calibration lines, a \(6\times6\) kernel, a relative calibration singular-value threshold of \(0.02\), and an image-domain eigenvalue threshold of zero.  The retained ESPIRiT eigenvector was normalized pixelwise so that \(\sum_{c=1}^{12}\abs{\widehat S_{t,c}(r)}^2=1\) on its support; hence the usual denominator in the sensitivity-weighted combination equals one.  The combined reference frame was
\begin{equation}
  x_{0,t}=\sum_{c=1}^{12}\overline{\widehat S_{t,c}}\,
  \mathcal F^{-1}_{xy}k_{t,c},
  \label{eq:coil_combination}
\end{equation}
where \(\widehat S_{t,c}\) and \(k_{t,c}\) are the sensitivity map and fully sampled \(k\)-space of virtual coil \(c\).  The resulting complex cine \(X_0\) was normalized by its root-mean-square amplitude,
\begin{equation}
  X_0\leftarrow X_0\bigg/
  \frac{\lVert X_0\rVert_F}{\sqrt{N_xN_yT}}.
  \label{eq:rms_normalization}
\end{equation}

\subsection{Retrospective acquisition model}
\label{sec:acquisition_model}

The numerical model is complex-valued.  We regard \(\mathbb C^{N_xN_y\times T}\) as a real Hilbert space with inner product
\begin{equation}
  \langle X,Y\rangle_{\mathbb R}
  =\operatorname{Re}\operatorname{tr}(X^*Y),
  \label{eq:complex_real_inner_product}
\end{equation}
and use the corresponding conjugate adjoint \(\cA^*\).  With this convention, the singular-value spectral functions, weak-convexity constants, and proximal-map results above are unchanged.

After coil combination, the forward operator was
\begin{equation}
  \cA_{\Omega}(X)=P_{\Omega}\mathcal F_{xy}X,
  \label{eq:single_image_forward}
\end{equation}
where \(\mathcal F_{xy}\) is the orthonormal two-dimensional Fourier transform applied frame by frame.  Acceleration factors were \(R\in\{4,6,8\}\).  Each frame retained \(\operatorname{round}(N_y/R)\) phase-encoding lines, including eight central ACS lines.  Peripheral lines followed a centre-weighted variable-density distribution with usage-dependent complementary temporal coverage; every selected line contained the complete readout direction.

Circular complex Gaussian noise was added to the acquired single-image \(k\)-space.  With \(s_{\Omega}=\cA_{\Omega}(X_0)\), \(m\) acquired complex samples, and \(z=(z_{\mathrm r}+\mathrm i z_{\mathrm i})/\sqrt2\) for independent standard-normal \(z_{\mathrm r}\) and \(z_{\mathrm i}\),
\begin{equation}
  y_{\Omega}=s_{\Omega}+\sigma_qP_{\Omega}z,
  \qquad
  \sigma_q=\frac{\lVert s_{\Omega}\rVert_2}
  {\sqrt m\,10^{q/20}},
  \qquad q=\mathrm{SNR}_{\mathrm{nom}}.
  \label{eq:noise_scale}
\end{equation}
Thus, \(q\in\{0,10,20,30\}\)~dB is an input to the noise-scaling formula, not the noise amplitude itself: \(0\)~dB produces the strongest noise and \(30\)~dB the weakest.  For each series and \(R\), the same standard noise draw was rescaled across the four values of \(q\).  Realized measurement SNR differed from the nominal value by at most approximately \(0.04\)~dB.

For validation noise \(\varepsilon=(\varepsilon_{\mathrm r}+\mathrm i\varepsilon_{\mathrm i})/\sqrt2\), with independent \(\varepsilon_{\mathrm r},\varepsilon_{\mathrm i}\sim N(0,\tau^2I_m)\), one has \(\E\norm{\varepsilon}_2^2=m\tau^2\), so the fixed-candidate conditional unbiasedness statement remains unchanged.  Moreover, for deterministic \(d\in\mathbb C^m\),
\begin{equation}
  \operatorname{Var}\!\left[\operatorname{Re}(d^*\varepsilon)\right]
  =\frac{\tau^2}{2}\norm{d}_2^2.
  \label{eq:complex_validation_variance}
\end{equation}
Consequently, the real finite-grid oracle bound in Section~8 remains valid for the complex experiment as a conservative bound; using the exact complex-Gaussian variance would replace its constant \(4\sqrt2\) by \(4\).

Table~\ref{tab:experimental_settings} summarizes the fixed protocol. Deterministic child seeds derived from the series identifier, acceleration, and purpose were generated from base seeds 2026100301 (masks), 2026100302 (validation), and 2026100303 (noise).

\begin{table}[t]
\centering
\caption{Fixed experimental settings. Candidate sets were constructed with the four pilot series and frozen before evaluation of the 20 formal series.}
\label{tab:experimental_settings}
\small
\renewcommand{\arraystretch}{1.08}
\begin{tabular}{@{}>{\raggedright\arraybackslash}p{0.23\textwidth}>{\raggedright\arraybackslash}p{0.70\textwidth}@{}}
\toprule
Component & Setting \\
\midrule
Formal data & 20 OCMR series; 10 short-axis/10 long-axis; nine 1.5-T/eleven 3-T; 16--27 phases. \\
Pilot data & Four disjoint series used only for pipeline checks, candidate construction, and boundary auditing. \\
Partial Fourier & Eight POCS iterations for four formal series before coil compression. \\
Coil processing & Global compression to 12 virtual coils; frame-wise ESPIRiT with 24 central $k_y$ lines, $6\times6$ kernel, calibration threshold $0.02$, image eigenvalue threshold $0$; Eq.~\eqref{eq:coil_combination}; complex RMS normalization. \\
Sampling & time-varying variable-density Cartesian masks; eight central ACS lines in every frame and included in the reported acceleration. \\
Noise & Eq.~\eqref{eq:noise_scale}; common standard draw across $q$ for fixed series and $R$. \\
Validation & 15\% of acquired peripheral line--time pairs; same split for all methods; ACS always retained in training. \\
Frozen candidates & Separate set for every method and $R\times q$ condition: six Proposed, four Soft, four Hard, and six L+S candidates. \\
Optimization & Relative iterate tolerance $10^{-5}$ or 300 iterations; Proposed scalar proximal-root tolerance $10^{-9}$. \\
\bottomrule
\end{tabular}
\end{table}

\subsection{Condition-specific parameter selection}
\label{sec:parameter_selection}

Candidate sets were allowed to differ by method, acceleration, and nominal SNR, but their construction used only the four pilot series.  For each formal series and condition, \(15\%\) of acquired peripheral line--time pairs formed a validation set; ACS samples remained in training.  All methods used the same split.  A candidate \(\theta\) was fitted only to the training data and scored by
\begin{equation}
  V(\theta)=\frac{1}{m_{\mathrm{val}}}
  \left\lVert\cA_{\mathrm{val}}(\widehat X_\theta)
  -y_{\mathrm{val}}\right\rVert_2^2.
  \label{eq:validation_score}
\end{equation}
The lowest-score candidate was selected independently for each series and condition and then refitted to all acquired samples.  Reference-image metrics were not used for selection.  As noted in Section~8.3, the oracle inequality applies directly to the split-trained candidates.  The full-data refit is an empirical post-selection step and is not covered without an additional algorithmic-stability argument.

For each fit, \(X_{\mathrm{init}}\) denotes the adjoint, or zero-filled, initialization computed from the measurements used in that fit.  For Proposed, \(\theta=(\alpha,a,c)\) and \(\lambda=c\sigma_1(X_{\mathrm{init}})\).  Soft and Hard used normalized thresholds \(\tau=q_s\sigma_1(X_{\mathrm{init}})\).  L+S used pairs \((q_L,q_S)\) with \(\tau_L=q_L\sigma_1(X_{\mathrm{init}})\) and \(\tau_S=q_S\max\abs{\mathcal F_tX_{\mathrm{init}}}\).  The retained sets were sparse collections of pilot-supported candidates rather than complete Cartesian products.  This keeps the search computationally tractable while defining a fixed, condition-specific selection procedure for every method.

\subsection{Outcome measures and paired analysis}
\label{sec:evaluation_metrics}

Magnitude PSNR was the primary summary measure.  Complex NMSE and magnitude SSIM were complementary measures used to assess whether the PSNR conclusions were consistent across error magnitude and structural fidelity.  SSIM was averaged frame-wise inside a reference-defined ROI whose temporal-mean magnitude exceeded \(5\%\) of its maximum.

All comparisons were paired by series and oriented so that positive values favoured Proposed:
\begin{equation}
\begin{aligned}
  \Delta\mathrm{NMSE}_B
  &=\mathrm{NMSE}_B-\mathrm{NMSE}_{\mathrm{P}},\\
  \Delta\mathrm{PSNR}_B
  &=\mathrm{PSNR}_{\mathrm{P}}-\mathrm{PSNR}_B,\\
  \Delta\mathrm{SSIM}_B
  &=\mathrm{SSIM}_{\mathrm{P}}-\mathrm{SSIM}_B.
\end{aligned}
  \label{eq:paired_differences}
\end{equation}
Condition-level summaries report the mean difference and number of positive differences; case-level distributions and 95\% nonparametric bootstrap intervals from 20,000 paired resamples are shown graphically. The intervals are descriptive and are not multiplicity-adjusted hypothesis tests.

\subsection{Reconstruction results}
\label{sec:evaluation_results}

Table~\ref{tab:single_image_psnr} gives the Proposed mean PSNR and its paired mean differences from each baseline. Proposed exceeded Hard in 11 of 12 conditions. The exception was $R=4$, 10~dB, where the mean difference was $-0.272$~dB (7/20 wins; 95\% bootstrap interval $[-0.506,-0.027]$~dB). Against Soft, Proposed was generally less favourable at 0--10~dB, mixed at 20~dB, and favourable at every 30-dB condition. Against L+S, Proposed was lower throughout 10~dB, mixed at 0 and 20~dB, and higher at every 30-dB condition.

\begin{table}[t]
\centering
\caption{Mean magnitude PSNR of Proposed and mean paired PSNR differences in dB (Proposed minus baseline), with Proposed wins in parentheses. $\mathrm{SNR}_{\mathrm{nom}}$ is the parameter $q$ in Eq.~\eqref{eq:noise_scale}; larger values mean less added noise. Each row contains 20 paired series.}
\label{tab:single_image_psnr}
\small
\setlength{\tabcolsep}{3.2pt}
\begin{tabular}{cccccc}
\toprule
$R$ & $\mathrm{SNR}_{\mathrm{nom}}$ & Proposed & $\Delta$ Hard & $\Delta$ Soft & $\Delta$ L+S \\
 & (dB) & PSNR (dB) & & & \\
\midrule
4 & 0  & 22.601 & $+2.664$ (20/20) & $-0.904$ (8/20)  & $+1.157$ (13/20) \\
4 & 10 & 29.547 & $-0.272$ (7/20)  & $-0.435$ (4/20)  & $-1.123$ (0/20) \\
4 & 20 & 37.271 & $+1.074$ (18/20) & $-0.082$ (6/20)  & $-0.329$ (2/20) \\
4 & 30 & 41.454 & $+3.386$ (20/20) & $+0.702$ (20/20) & $+0.623$ (18/20) \\
6 & 0  & 21.998 & $+4.222$ (20/20) & $-2.312$ (0/20)  & $+0.244$ (17/20) \\
6 & 10 & 28.581 & $+1.409$ (20/20) & $-0.937$ (0/20)  & $-0.814$ (0/20) \\
6 & 20 & 35.265 & $+0.798$ (19/20) & $+0.243$ (15/20) & $+0.184$ (14/20) \\
6 & 30 & 38.673 & $+1.876$ (20/20) & $+0.930$ (20/20) & $+1.062$ (19/20) \\
8 & 0  & 21.046 & $+3.183$ (16/20) & $-2.499$ (1/20)  & $-0.711$ (11/20) \\
8 & 10 & 27.518 & $+1.445$ (20/20) & $-0.625$ (1/20)  & $-0.708$ (2/20) \\
8 & 20 & 33.670 & $+0.488$ (19/20) & $+0.683$ (19/20) & $+0.749$ (18/20) \\
8 & 30 & 36.395 & $+1.003$ (20/20) & $+1.328$ (20/20) & $+1.777$ (20/20) \\
\bottomrule
\end{tabular}
\end{table}

Figure~\ref{fig:paired_psnr_distributions} shows that the 0-dB differences are wider and more skewed than those at higher nominal SNR. In particular, at $R=8$, 0~dB versus L+S, the mean was $-0.711$~dB but the median was $+0.336$~dB and the bootstrap interval crossed zero. The 0-dB row should therefore be interpreted as an extreme-noise regime with substantial case-to-case variability, rather than forced into a monotone trend.

\begin{figure}[t]
  \centering
  \includegraphics[width=\textwidth]{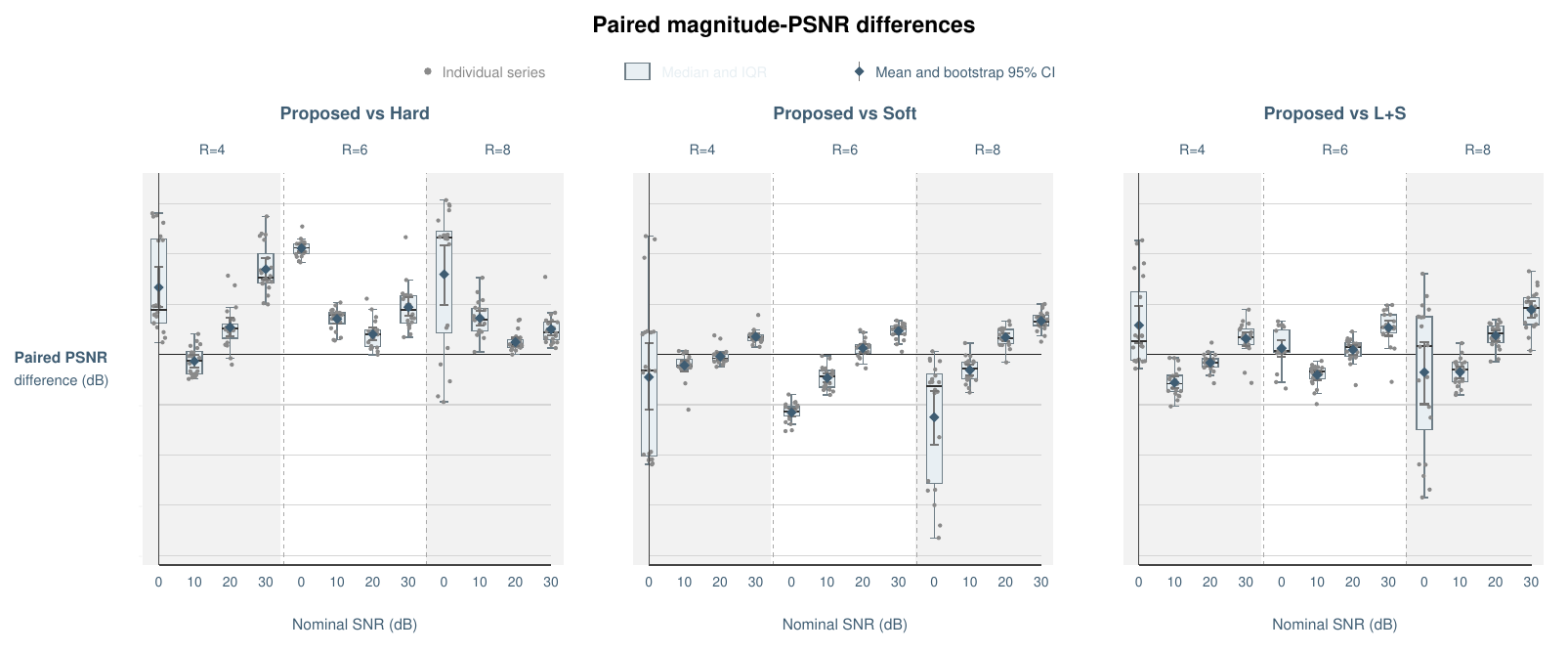}
  \caption{Paired magnitude-PSNR differences for the 20 formal series. Positive values favour Proposed. Individual series, medians, interquartile ranges, means, and 95\% paired-bootstrap intervals show both the average effect and its case-level dispersion.}
  \label{fig:paired_psnr_distributions}
\end{figure}

The complementary metrics in Fig.~\ref{fig:secondary_metrics} support the same condition-dependent interpretation. Relative to Hard, NMSE and SSIM were predominantly favourable or near neutral. Relative to Soft and L+S, their signs generally followed the PSNR crossover, while the widest intervals occurred under the strongest noise. These measures are supporting evidence rather than a second optimization target.

\begin{figure}[t]
  \centering
  \includegraphics[width=\textwidth]{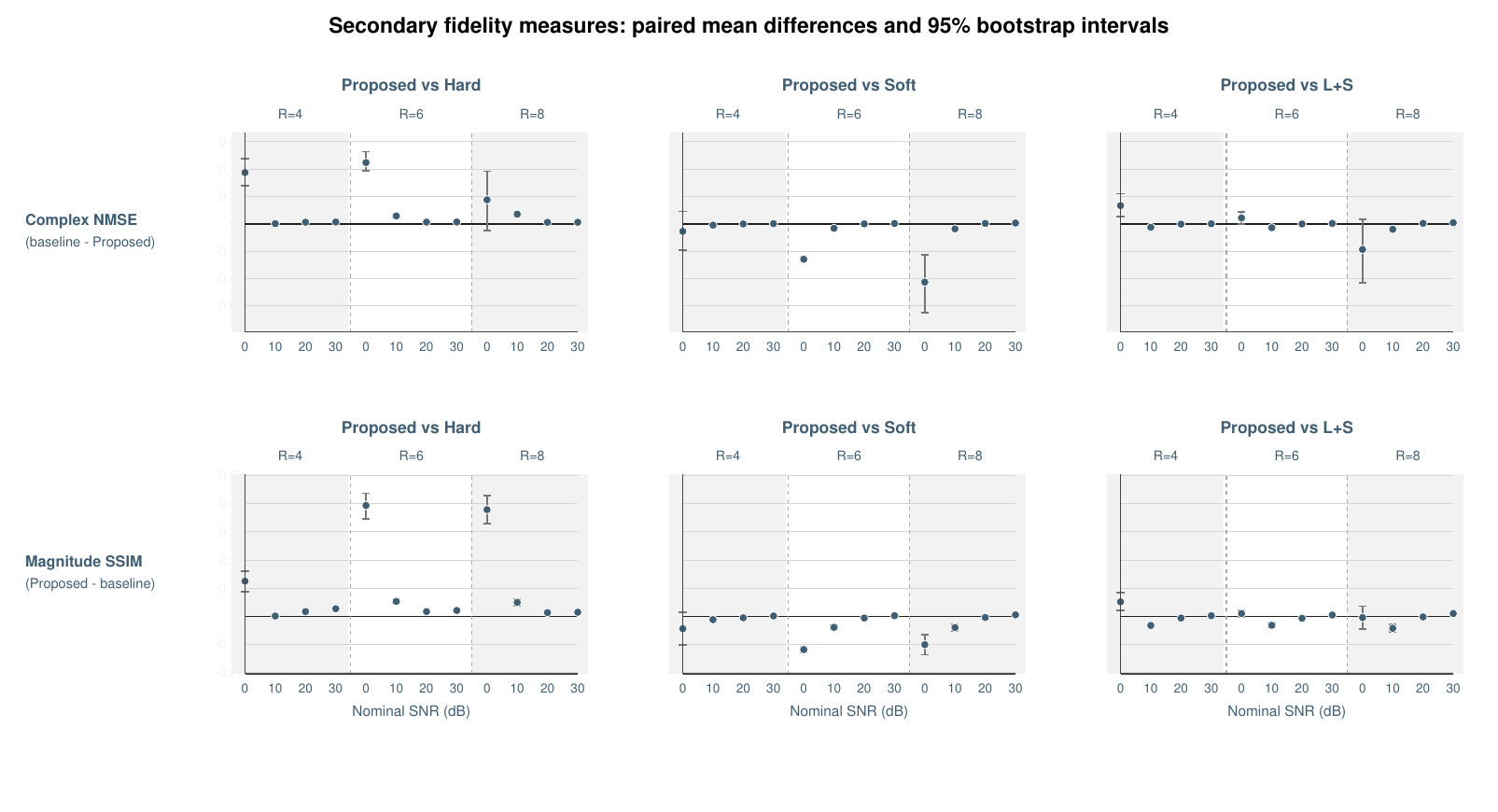}
  \caption{Mean paired differences and 95\% paired-bootstrap intervals for complex NMSE and magnitude SSIM. NMSE is baseline minus Proposed; SSIM is Proposed minus baseline, so positive values favour Proposed in both rows.}
  \label{fig:secondary_metrics}
\end{figure}

Figure~\ref{fig:qualitative_temporal} presents two prespecified median-difference examples: Proposed versus Soft at $R=4$, 0~dB, and Proposed versus L+S at $R=8$, 30~dB. Each block contains the reference, the displayed-frame sampling mask, the noisy zero-filled reconstruction, all four reconstructions, and jet-coloured absolute-error maps. Error cells under the reference and mask are intentionally blank. The examples illustrate the severe-noise stability of Soft and the reduced clean-data bias of Proposed without treating two cases as population-level evidence.

\begin{figure}[!htb]
  \centering
  \includegraphics[width=\textwidth]{Figures/qualitative-cine-reconstructions.png}
  \caption{Acquisition patterns, reference-scaled reconstructions, and normalized magnitude-error maps for two prespecified median-difference examples: (a) Proposed versus Soft, $R=4$, $\mathrm{SNR}_{\mathrm{nom}}=0$~dB; (b) Proposed versus L+S, $R=8$, $\mathrm{SNR}_{\mathrm{nom}}=30$~dB. White entries in the masks denote acquired phase-encoding locations. Frame 12 of 22 is shown; PSNR and SSIM are computed over the complete cine. Magnitudes and errors are scaled by the corresponding reference-cine maximum, and all error maps share a colour scale.}
  \label{fig:qualitative_temporal}
\end{figure}

\subsection{Protocol diagnostics}
\label{sec:protocol_diagnostics}

\subsubsection{Pilot-only parameter-boundary audit}
\label{sec:boundary_audit}

Candidate ranges were expanded only when a pilot-series validation winner lay on a lower or upper boundary and its improvement over the adjacent interior candidate exceeded the numerical tolerance
\begin{equation}
  \epsilon_{\mathrm{num}}
  =10^{-8}+10^{-4}\abs{V_{\mathrm{boundary}}}.
  \label{eq:boundary_tolerance}
\end{equation}
Specifically, expansion was triggered only if
\begin{equation}
  V_{\mathrm{interior,best}}-V_{\mathrm{boundary}}
  >\epsilon_{\mathrm{num}}.
  \label{eq:boundary_trigger}
\end{equation}
The corresponding coordinate was moved outward while the remaining coordinates were held fixed, and the new candidates were evaluated with unchanged masks, noise, validation splits, and stopping rules.  Six rounds evaluated \(97\) outward settings, corresponding to \(388\) pilot reconstructions, with no failures; the sixth round produced no further trigger.  The candidate sets were then frozen before formal evaluation.  No boundary-driven search was performed on the \(20\) formal series, because doing so would adapt the tuning space to the evaluation data.  Thus, no further expansion was indicated by the prespecified audit under the frozen \(300\)-iteration protocol; changing the stopping or selection protocol would require the audit to be repeated.

\subsubsection{Convergence and iteration-cap sensitivity}
\label{sec:convergence_sensitivity}

For Proposed, Soft, and Hard, the relative iterate change was
\begin{equation}
  r_k=\frac{\norm{X^{k+1}-X^k}_F}
  {\max\{1,\norm{X^k}_F\}}.
  \label{eq:relative_iterate_change}
\end{equation}
For L+S, it was
\begin{equation}
  r_k=\frac{\sqrt{\norm{L^{k+1}-L^k}_F^2+\norm{S^{k+1}-S^k}_F^2}}
  {\max\!\left\{1,\sqrt{\norm{L^k}_F^2+\norm{S^k}_F^2}\right\}}.
  \label{eq:relative_iterate_change_ls}
\end{equation}
A run stopped when \(r_k\leq10^{-5}\) or after \(300\) iterations.  The relative tolerance was reached within \(300\) iterations in \(74/240\) Proposed, \(175/240\) Soft, \(238/240\) Hard, and \(40/240\) L+S formal refits; otherwise the run ended at the common cap.  The primary comparison is therefore between prespecified early-stopped procedures.

For each diagnostic condition and baseline comparison, the representative series was the one whose Proposed-minus-baseline PSNR difference was closest to the condition median; ties were resolved by series identifier.  The four corner conditions \(R\in\{4,8\}\) and \(\mathrm{SNR}_{\mathrm{nom}}\in\{0,30\}\)~dB were prespecified representative diagnostics.  The \(R=4\), \(10\)-dB Proposed--Hard continuation was a targeted post hoc sensitivity analysis prompted by the anomalous formal-cohort result.

With selected parameters held fixed, representative runs were continued to \(600\) iterations.  For Proposed and L+S in the four corner conditions, all eight runs reached tolerance by \(600\) iterations, the largest absolute PSNR change was \(0.104\)~dB, and no Proposed--L+S ordering changed.  In the post hoc \(R=4\), \(10\)-dB Proposed--Hard comparison, the gap changed from \(-0.139\) to \(-0.855\)~dB because Proposed decreased by \(0.716\)~dB while Hard remained unchanged after converging at iteration \(140\).  The condition in which Hard had higher mean PSNR is therefore not explained by an iteration cap that favoured Hard.  Because changing the cap alters the coupled tuning-and-stopping procedure, \(300\) iterations was retained for the primary experiment; a \(600\)-iteration primary protocol would require renewed pilot tuning and boundary auditing.

\begin{figure}[!htb]
  \centering
  \includegraphics[width=\textwidth]{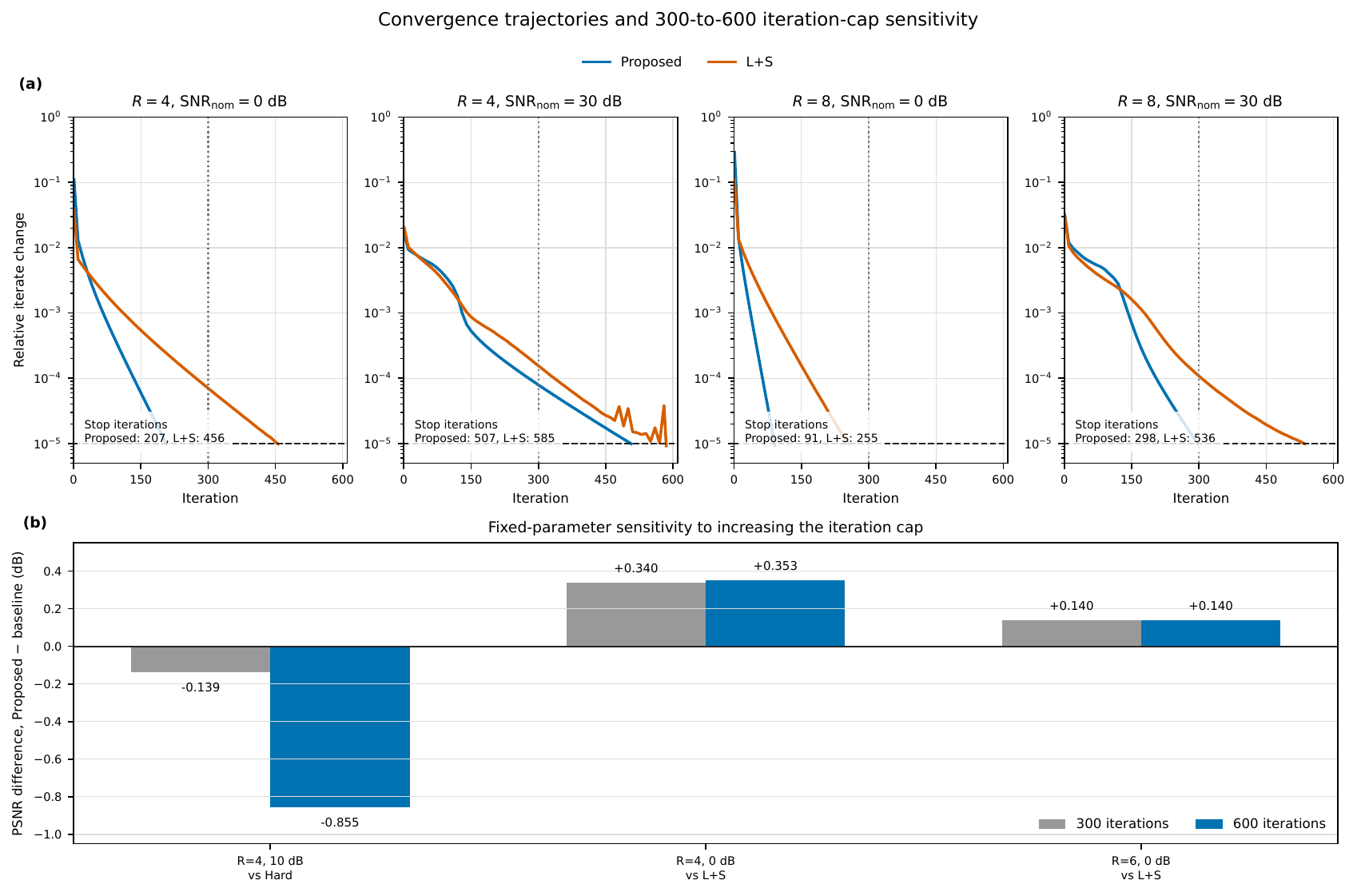}
  \caption{Convergence and iteration-cap sensitivity. Relative iterate changes are shown for representative Proposed and L+S corner conditions, together with Proposed-minus-baseline PSNR differences at 300 and 600 iterations for targeted comparisons. Parameters were held fixed during continuation.}
  \label{fig:convergence_sensitivity}
\end{figure}

\subsection{Discussion and limitations}
\label{sec:evaluation_discussion}

The observed noise-dependent crossover is consistent with reduced shrinkage bias in dominant temporal components when measurement noise is small, but the experiment does not isolate that mechanism causally.  In the \(0\)-dB regime, the acquired-signal and added-noise RMS values are equal; anatomy, sampling, and validation selection therefore produce broad and skewed paired differences.

The study contains \(20\) OCMR series, and patient-level independence could not be confirmed.  Undersampling and Gaussian noise were retrospective, and ESPIRiT maps and references were derived from fully sampled data before degradation.  Coil combination also limits the conclusions to the stated single-image protocol rather than a sensitivity-encoded multi-coil reconstruction.  Condition-specific candidate sets were indexed by the known nominal SNR used in the simulation.  In prospective data this quantity is generally unavailable and would require a noise/SNR estimate or replacement by a pooled SNR-agnostic candidate set.  Candidate sets were constructed from four pilot series, and the convergence analysis used representative cases rather than the complete cohort.  Finally, image-fidelity metrics were not linked to a blinded reader or clinical task.  Within these limits, the evidence supports a mean-PSNR advantage over Hard in most tested conditions and a noise-dependent, rather than universal, advantage over Soft and L+S.

\section{Conclusion}

This paper gives an explicit variational construction for the logistic smooth-to-hard shrinker.  Its Lambert-\(W\) inverse induces a nonnegative saturating penalty whose unique unit-scale proximal map is the prescribed shrinkage rule and whose hard-threshold limit is explicit.  The maximum shrinker slope \(L_a\) determines the exact weak-convexity constant \(\rho_a=1-L_a^{-1}\), linking perturbation sensitivity, negative curvature, and the residual curvature of the proximal objective.

The same constant is inherited by the spectral penalty.  At an arbitrary proximal scale \(\beta\), the update is obtained by singular-vector alignment and a monotone scalar solve, and it is unique when \(\beta\rho_a<1\).  Under the stated step-size condition, proximal gradient provides sufficient descent, vanishing stationarity residuals, stationary cluster points, and an \(O(1/K)\) bound for the smallest squared residual.  Independent validation over a finite grid controls prediction risk, while restricted stability is separately required for Frobenius reconstruction guarantees.

In the OCMR study, Proposed exceeded Hard in mean PSNR in \(11\) of \(12\) conditions and performed best relative to Soft and L+S in the \(30\)-dB regime, confirming a condition-dependent rather than universal advantage.  The conclusions remain limited by the small retrospective single-image cohort, simulated degradation, SNR-indexed candidate sets, early-stopped optimization, finite-\(a\) effective rather than exact rank reduction, post-selection refitting outside the direct oracle guarantee, and absence of clinical endpoints.

\end{document}